\newcommand{\res}{\upharpoonright}
\newcommand{\A}{{\mathbb A}}
\theoremstyle{plain}
\newtheorem{theorem}{Theorem}[section]
\newtheorem{lemma}[theorem]{Lemma}
\newtheorem{proposition}[theorem]{Proposition}
\newtheorem*{definition}{Definition}
\theoremstyle{remark}
\numberwithin{equation}{section}
\newcommand{\dbullet}{\,\raisebox{-1.4pt}{\( \bullet \)}\,}
\begin{document}
\title{Dual Ramsey theorem for trees}

\author{S{\l}awomir Solecki}

\thanks{Research supported by NSF grants DMS-1266189 and DMS-1800680.} % and 1700426

\address{Department of Mathematics\\
Cornell University\\
Malott Hall\\
Ithaca, NY 14853}

\email{ssolecki@cornell.edu}

\subjclass[2000]{05D10, 05C55}

\keywords{Ramsey theory, dual Ramsey theorem for trees}

\begin{abstract}
The classical Ramsey theorem was generalized in two major ways: to the dual Ramsey theorem, by Graham and Rothschild, and to Ramsey theorems for trees,
initially by Deuber and Leeb. Bringing these two lines of thought together, we prove the dual Ramsey theorem for trees. Galois connections between partial orders
are used in formulating this theorem, while the abstract approach to Ramsey theory, we developed earlier, is used in its proof.
\end{abstract}

\maketitle

\section{Introduction}

A rich theory of Ramsey results has been developed since the publication of Ramsey's original paper. (For
an introduction to the subject see \cite{Nes}.)
The discovery in \cite{KPT} of close connections between Ramsey Theory and Topological Dynamics gave rise to substantial new advances in the theory in the last decade.
(The reader may consult \cite{NVT} for a survey.) The present paper was motivated in equal measure by these recent developments and by the internal logic of Ramsey Theory
as it relates to the idea of duality. (For a different aspect of duality in Ramsey Theory, see \cite{So0}.)

The Dual Ramsey Theorem was proved by Graham and Rothschild in \cite{GrRo71}. It
was then realized that the dual version was, in fact, a strengthening of Ramsey's original result.
Another independent line of generalizations of Ramsey's theorem was initiated
by Deuber \cite{Deu} and Leeb, see \cite{GrRo75}. These authors generalized Ramsey's theorem from linear orders to trees. Further Ramsey theorems for trees
were found in \cite{Fou}, \cite{Jas}, \cite{Mil} (see also \cite{Soc}), and \cite{So2}. (Paper \cite{So2} provides a uniform treatment of these results.)

The aim of the present paper is to bring together these two lines of development by proving the Dual Ramsey Theorem for Trees as announced in \cite{So3}. This theorem is
a common strengthening of two classical results---Leeb's Ramsey theorem for trees and Graham and Rothschild's Dual Ramsey Theorem.
It should be noted that the first one of these theorems is formulated in terms of copies of trees, the second one in terms of partitions of finite initial segments
of natural numbers. So the first challenge is to find objects that generalize both: copies of trees and partitions. To this end, the two classical Ramsey
theorems are restated in terms of functions. Their common generalization is then formulated using functions that turn out to come from appropriately modified
Galois connections in the sense of Ore \cite{Ore}, \cite{G-S}. (The association of duality in Ramsey theory with Galois connections
is new and may be worth further investigation.) This generalization, which is the main theorem of the paper,
is then proved with the use of our abstract approach to Ramsey theory from \cite{Sol}.

Aside from the theoretical considerations, the motivation for our main result comes, in a vague sense,
from the recent results in \cite{BaKw} and \cite[Section 3]{Mo}.

In Section~\ref{S:tree}, we give all the required definitions,
the statement of our main result, Theorem~\ref{T:mn}, and its context. We also prove there that the main theorem
strengthens the two classical Ramsey results mentioned above.
In Section~\ref{S:aR}, we outline the fragment of the abstract
Ramsey theory developed in \cite{Sol} that is needed for our proof and we state the appropriate versions of
the Hales--Jewett theorem that will be used. In Section~\ref{S:proof}, we give a proof of the main result; its principal technical 
argument is contained in Section~\ref{Su:lpP}.

\section{The theorem and its context}\label{S:tree}

We start this section with collecting the basic notions concerning trees.
Then we state our main definition of rigid surjections between trees and formulate the main result---the Ramsey theorem for
rigid surjections, which we call the Dual Ramsey Theorem for Trees. We follow it with a restatement of two classical Ramsey theorems---Leeb's
Ramsey theorem for trees and Graham and Rothschild's Dual Ramsey Theorem. We show that rigid surjections between trees are objects that are more
general than the objects in these two classical Ramsey statements, and we give an argument that the Dual Ramsey Theorem for Trees is their common
generalization. We finish this section with explaining how rigid surjections fit in the larger framework of
Galois connections.

\subsection{Ordered trees}\label{Su:deot}

By a {\em tree} $T$
we understand a finite, partial ordered set with a smallest element, called {\em root}, and such that
the set of predecessors of each element is linearly ordered. So in this paper, {\em all trees are non-empty and finite}.
By convention, we regard every node of a tree as one of its
own predecessors and as one of its own successors. We denote the tree order on $T$ by
\[
\sqsubseteq_T.
\]

Each tree $T$ carries a binary function $\wedge_T$ that assigns to each
$v,w\in T$ the largest with respect to $\sqsubseteq_T$ element $v\wedge_Tw$ of $T$ that is a predecessor of
both $v$ and $w$.

For a tree $T$ and $v\in T$, let ${\rm im}_T(v)$ be the set of all {\em immediate successors} of $v$, and
we do not regard $v$ as one of them.
(We will occasionally suppress the subscripts from various pieces of notation introduced above if we
deem them clear from the context.)
A tree $T$ is called {\em ordered} if for each $v\in T$ there is a fixed linear order of ${\rm im}(v)$. Such an assignment allows us to define
the lexicographic linear order
\[
\leq_T
\]
on all the nodes of $T$ by stipulating that $v\leq_T w$ if $v$ is a predecessor of $w$ and, in case $v$ is not a predecessor
of $w$ and $w$ is not a predecessor of $v$, that $v\leq_Tw$
if the predecessor of $v$ in ${\rm im}(v\wedge w)$ is less than or equal to the predecessor of $w$ in
${\rm im}(v\wedge w)$ in the given order on ${\rm im}(v\wedge w)$.

\subsection{The notion of rigid surjection}

The following definition is essentially due to Deuber \cite{Deu}. Let $S$ and $T$ be ordered trees. A function $e\colon S\to T$ is called a {\em morphism} if
\begin{enumerate}
\item[(i)] for $v,w\in S$,
\[
e(v\wedge_Sw) = e(v)\wedge_T e(w);
\]

\item[(ii)] $e$ is monotone between $\leq_S$ and $\leq_T$, that is, for $v,w\in S$,
\[
v\leq_S w\Longrightarrow e(v)\leq_T e(w);
\]

\item[(iii)] $e$ maps the root of $S$ to the root of $T$.
\end{enumerate}

An {\em embedding} is an injective morphism.

Here is the definition of functions for which our main theorem will be proved. As explained in Section~\ref{Su:Gal}, it comes from
the notion of Galois connection.

\begin{definition}\label{D:rs} Let $S$, $T$ be ordered trees. A function $f\colon T\to S$ is called a {\em rigid surjection} provided
there exists a morphism $e\colon S\to T$ such that
\begin{equation}\label{E:der}
f\circ e = {\rm id}_S\;\hbox{ and }\; e\circ f \sqsubseteq_T {\rm id}_T.
\end{equation}
\end{definition}
The last condition in the definition means that $e(f(w))\sqsubseteq_T w$ for each $w\in T$. Note that $f$ need not be a morphism.
It is clear from the definition that $f$ is surjective and $e$ injective, so $e$ is an embedding.

We note that in the above situation $f$ determines $e$, that is, if $f\colon T\to S$ and
$e_1, e_2$ are morphisms from $S$ to $T$ such that \eqref{E:der} holds for each of them, then $e_1=e_2$.
(This means that $e$ can be defined from $f$; indeed, if
$f \colon T \to S$ is a rigid surjection, then $e \colon S \to T$ is given by $e(v) = \bigwedge_T f^{-1}(v)$.)
We call this unique $e$ the {\em injection of }$f$.

We register the following easy to prove lemma.

\begin{lemma}\label{L:coin}
Let $f\colon T\to S$ and $g\colon V\to T$ are rigid surjections, then so is $f\circ g$. In fact, if $d$ and $e$ are the injections of $f$ and $g$,
respectively, then $e\circ d$ is the injection of $f\circ g$.
\end{lemma}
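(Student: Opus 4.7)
The goal is to check the two equations in Definition~\ref{D:rs} for the candidate pair $(f\circ g,\, e\circ d)$, where $d\colon S\to T$ is the injection of $f$ and $e\colon T\to V$ is the injection of $g$.

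First I would verify that $e\circ d$ is itself a morphism from $S$ to $V$. This is routine: each of conditions (i)--(iii) in the definition of morphism is preserved under composition, since $\wedge$ is preserved at each stage, monotonicity composes, and the root of $S$ is sent by $d$ to the root of $T$ and then by $e$ to the root of $V$. So $e\circ d$ is a candidate injection.

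Next I would establish the first equation, $(f\circ g)\circ(e\circ d)=\mathrm{id}_S$, purely by associativity and the two given identities $g\circ e=\mathrm{id}_T$ and $f\circ d=\mathrm{id}_S$:
\[
(f\circ g)\circ(e\circ d) \;=\; f\circ(g\circ e)\circ d \;=\; f\circ\mathrm{id}_T\circ d \;=\; f\circ d \;=\; \mathrm{id}_S.
\]

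The slightly more substantive step is the second equation $(e\circ d)\circ(f\circ g)\sqsubseteq_V\mathrm{id}_V$. Here I would first record the auxiliary fact that \emph{any morphism preserves the tree order}: if $u\sqsubseteq_T u'$, then $u\wedge_T u'=u$, so applying condition (i) of the morphism gives $e(u)=e(u)\wedge_V e(u')$, hence $e(u)\sqsubseteq_V e(u')$. With this in hand, for any $w\in V$ I would chain the two given inequalities: from $d\circ f\sqsubseteq_T\mathrm{id}_T$ applied to $g(w)\in T$ we get $d(f(g(w)))\sqsubseteq_T g(w)$, and then applying the order-preserving morphism $e$ yields $e(d(f(g(w))))\sqsubseteq_V e(g(w))$; finally, $e\circ g\sqsubseteq_V\mathrm{id}_V$ gives $e(g(w))\sqsubseteq_V w$, and transitivity of $\sqsubseteq_V$ closes the argument.

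The only genuinely non-formal point is the preservation of $\sqsubseteq$ by morphisms, which is not singled out in the definition but follows immediately from preservation of $\wedge$; aside from this, the proof is essentially bookkeeping with the two defining identities. Uniqueness of the injection of a rigid surjection, already noted in the text, then forces $e\circ d$ to be the injection of $f\circ g$.
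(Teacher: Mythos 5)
Your proposal is correct and supplies exactly the argument the paper leaves to the reader (the lemma is stated as ``easy to prove'' with no proof given). The verification of the two identities in Definition~\ref{D:rs} for the pair $(f\circ g,\, e\circ d)$ is straightforward, and you correctly isolate the one non-formal step---that morphisms preserve $\sqsubseteq$, which follows from preservation of $\wedge$---needed to chain $d\circ f\sqsubseteq_T\mathrm{id}_T$ and $e\circ g\sqsubseteq_V\mathrm{id}_V$ together.
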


We also have the following lemma.

\begin{lemma}\label{L:prexmb}
Let $S$ and $T$ be ordered trees. Let $e\colon S\to T$ be an embedding. There exits a rigid surjection $f\colon T\to S$
such that $e$ is the injection of $f$.
\end{lemma}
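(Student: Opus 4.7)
The plan is to define $f$ by the obvious "nearest ancestor in the image of $e$" construction and then verify the two defining properties in \eqref{E:der}; the rest is automatic by the uniqueness remark immediately following Definition~\ref{D:rs}.

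First I would observe that, since $e$ is a morphism, $e$ sends the root of $S$ to the root of $T$, so for every $w\in T$ the set
\[
A_w = \{\,v\in S : e(v)\sqsubseteq_T w\,\}
\]
is nonempty. The set $\{e(v):v\in A_w\}$ is a finite subset of the chain of $\sqsubseteq_T$-predecessors of $w$ in $T$, hence is linearly ordered by $\sqsubseteq_T$, and therefore admits a largest element $e(v_w)$ (unique since $e$ is injective). Define
\[
f\colon T\to S,\qquad f(w)=v_w.
\]

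Next I would verify the two parts of \eqref{E:der}. For $e\circ f\sqsubseteq_T\mathrm{id}_T$: by construction, $e(f(w))=e(v_w)\sqsubseteq_T w$. For $f\circ e=\mathrm{id}_S$: given $v\in S$ and $w=e(v)$, the element $e(v)$ itself lies in $\{e(v'):v'\in A_w\}$, and being a maximum of $\sqsubseteq_T$-predecessors of $e(v)$, it equals $e(v)$; by injectivity $f(e(v))=v$. This shows that $f$ is a rigid surjection witnessed by $e$, and the uniqueness statement recorded after Definition~\ref{D:rs} then guarantees that $e$ is the injection of $f$. If desired, one can instead verify the explicit formula $e(v)=\bigwedge_T f^{-1}(v)$ directly: $e(v)\in f^{-1}(v)$ by the computation just made, and every $w\in f^{-1}(v)$ satisfies $e(v)=e(f(w))\sqsubseteq_T w$, so $e(v)$ is simultaneously an element of $f^{-1}(v)$ and a lower bound for it, hence equals its meet.

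There is no substantial obstacle here: the only thing to be careful about is making sure $A_w$ has a maximum, which uses both that $e$ preserves the root (giving non-emptiness) and that predecessor sets in a tree are linearly ordered. Neither condition (ii) in the definition of morphism nor any property of the lexicographic order $\leq_T$ is needed for this lemma --- only the $\wedge$-preservation and root-preservation of $e$, together with injectivity, enter the argument.
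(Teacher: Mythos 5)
Your proof is correct and follows essentially the same approach as the paper: the paper defines $f(w)$ to be the $\sqsubseteq_S$-largest $v\in S$ with $e(v)\sqsubseteq_T w$ (an equivalent phrasing of your nearest-ancestor-in-the-image construction) and leaves the verification to the reader, which you carry out correctly, including the appeal to the uniqueness remark following Definition~\ref{D:rs}.
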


\begin{proof} For $w\in T$, define $f(w)$ to be the $\sqsubseteq_S$-largest $v\in S$ such that $e(v)\sqsubseteq_T w$. We leave
checking that this $f$ works to the reader.
\end{proof}

Observe that, in general, there are many rigid surjections with the same injection.

\subsection{The main theorem}

By a {\em $b$-coloring}, for a natural number $b>0$, we understand a coloring with $b$ colors. The following result is the main theorem of the paper.

\begin{theorem}\label{T:mn}
Let $b$ be a positive integer. Let $S,T$ be ordered trees. There exists an ordered tree $U$ such that for each $b$-coloring
of all rigid surjections from $U$ to $S$ there is a rigid surjection $g_0\colon U\to T$ such that
\[
\{ f\circ g_0\mid f\colon T\to S \hbox{ a rigid surjection}\}
\]
is monochromatic.
\end{theorem}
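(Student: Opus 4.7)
The plan is to apply the abstract Ramsey framework of \cite{Sol}, announced to be the proof engine in Section~\ref{S:aR}, to the category of ordered trees with rigid surjections, using the appropriate Hales--Jewett theorem as the underlying pigeonhole. The first task is to verify that rigid surjections fit this framework: Lemma~\ref{L:coin} supplies associative composition, and the bijective correspondence between a rigid surjection and its injection (together with Lemma~\ref{L:prexmb}) allows one to separate a rigid surjection $f\colon U\to T$ into two pieces---its injection $e\colon T\to U$, which is an embedding, and the \emph{choice data} specifying, for each $w\in U$, the element $f(w)\in T$ subject to $e(f(w))\sqsubseteq_U w$ and $f\circ e=\mathrm{id}_T$. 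This separation is essential for the induction and mirrors the way rigid surjections arise from Galois connections.

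My overall strategy is a structural induction on the target tree $T$, building the witness $U$ iteratively. When $T$ is a single root the cone $\{f\circ g_0:f\colon T\to S\text{ rigid surjection}\}$ has only one element and there is nothing to prove; more generally, small $T$ reduces to the classical Deuber--Leeb theorem for embeddings. For the inductive step, I would write $T$ as an extension of a simpler tree $T'$ by detaching, say, a topmost branch, apply the inductive hypothesis to $S$ and $T'$ to obtain an auxiliary tree $U'$, and then enlarge $U'$ to $U$ by a Hales--Jewett amplification, so that rigid surjections $U\to T$ are assembled from rigid surjections $U'\to T'$ together with extra choice data indexed by the new nodes.

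The combinatorial heart is the Hales--Jewett translation. Once one fixes an embedding $e\colon T\to U$, the rigid surjections $U\to T$ with injection $e$ form a structured product: for each $w\in U\setminus e(T)$ one chooses a predecessor of $w$ in $e(T)$, independently except for the constraint imposed by $f\circ e=\mathrm{id}_T$. Colorings of this product are exactly the kind that Hales--Jewett (in the variant used in \cite{Sol}) addresses, with monochromatic combinatorial lines corresponding precisely to factorizations of rigid surjections through a common intermediate rigid surjection. Combining this choice-layer Ramsey statement with the Ramsey property for embeddings of ordered trees, and iterating along the induction on $T$, should produce the desired $g_0\colon U\to T$.

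The main obstacle, and the reason why the abstract framework of \cite{Sol} is needed rather than a direct Hales--Jewett argument, is that the cone in the conclusion must contain $f\circ g_0$ for \emph{every} rigid surjection $f\colon T\to S$ simultaneously. Different $f$'s have different injections, so their compositions $f\circ g_0$ interact with different embedded copies of $S$ inside $U$. The Hales--Jewett amplification of the inductive step must therefore be carried out relative to all embeddings of $S$ into the intermediate trees coherently. This ``two-tier'' problem, where a pigeonhole statement for choice data has to be iterated in parallel with a structural Ramsey statement for embeddings, is precisely what the abstract formalism of \cite{Sol} is designed to automate, and I expect the bulk of the technical work to consist in checking its hypotheses---closure under composition, existence of enough rigid surjections, and the appropriate Hales--Jewett pigeonhole axiom---for the category of ordered trees with rigid surjections.
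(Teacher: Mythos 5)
Your high-level strategy—feed ordered trees with rigid surjections into the abstract Ramsey engine of \cite{Sol} and discharge the pigeonhole axiom (LP) via a suitable Hales--Jewett theorem—is indeed the paper's strategy. But there is a genuine gap: the abstract framework cannot be applied to the class of \emph{all} rigid surjections directly, and the proposal does not supply the technical device that makes it work.

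The difficulty is the truncation. In a normed composition space one needs $\partial$ with $|\partial a|\le|a|$ so that the Ramsey domain is vanishing, i.e.\ iterating $\partial$ collapses each $P\in\mathcal P$ to a singleton. The natural candidate for a rigid surjection $f\colon T\to S$ is $\partial f=f^v$, the restriction of $f$ to the initial subtree $T^{i(v)}$ where $v$ is the second-$\le_S$-largest vertex. But for a general rigid surjection the domain is $T$, and shrinking the target $S$ to a singleton need not shrink the domain at all, so the domain does not vanish. The paper's fix is to introduce \emph{sealed} rigid surjections (the injection sends the $\le_S$-largest leaf to the $\le_T$-largest leaf), for which a sealed surjection onto a one-vertex tree automatically has a one-vertex domain, giving the vanishing property. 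The theorem is then proved first for sealed rigid surjections (Proposition~\ref{P:almo}), and transferred to all rigid surjections by a separate truncation-like operation built from the notion of $f$-conjugate leaves (Lemmas~\ref{L:cucu}--\ref{L:compot}), plus a final tree-glueing step. Your proposal contains no analogue of the sealed restriction nor of the conjugate-leaf passage, and without them the inductive machinery you invoke does not get off the ground. Relatedly, the paper's truncation induction proceeds on the size of the \emph{source} $S$ of the coloring, not on the target $T$; building $U$ by ``detaching a topmost branch'' of $T$ is a different decomposition and you would have to recheck that the cone in the conclusion, which quantifies over all $f\colon T\to S$ and hence over all possible injections of $f$, survives the induction on $T$.

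One specific statement is also incorrect: ``small $T$ reduces to the classical Deuber--Leeb theorem for embeddings.'' Even for small $T$, rigid surjections $T\to S$ form a strictly larger class than embeddings $S\to T$ (the injection does not determine the surjection), so the embedding Ramsey theorem does not imply the rigid-surjection statement. The correct trivial base is $S$ a single root (then $\partial P=P$ is a singleton and (LP) is vacuous), not a small $T$. Finally, the actual (LP) verification in the paper requires a nontrivial translation of the left-variable Hales--Jewett theorem to a tree-indexed product via the construction $S\otimes I$ and Lemma~\ref{L:trtr}; your description of the ``choice data'' layer is consistent with this but far from isolating the required statement.
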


\subsection{Ramsey theorem for trees and Dual Ramsey Theorem as consequences of Theorem~\ref{T:mn}}

An image of a tree $S$ under
an embedding from $S$ to $T$ is called a {\em copy of $S$ in $T$}. The following theorem is due to Leeb, see \cite{GrRo75}. (Sometimes, the theorem below is formulated with a weaker definition of embedding, in which
condition (iii) is omitted. The two versions are easily derivable from each other.)

\smallskip

\noindent {\em Given a positive integer $b$ and ordered trees $S$ and $T$, there is an ordered tree $U$ such that for each $b$-coloring of all copies of $S$ in $U$
there is a copy $T'$ of $T$ in $U$ such that all copies of $S$ in $T'$ get the same color.}

\smallskip

We chose to formulate this theorem directly in terms of embeddings.

\begin{theorem}[Leeb]\label{T:lee} Let $b$ be a positive integer. Let $S$ and $T$ be ordered trees. There exists an
ordered tree $U$ such that for each $b$-coloring of all embeddings from $S$ to $U$, there exists an embedding
$e_0\colon T\to U$ such that
\[
\{ e_0\circ d\mid d\colon S\to T\hbox{ an embedding}\}
\]
is monochromatic.
\end{theorem}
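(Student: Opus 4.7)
The plan is to deduce Leeb's theorem from Theorem~\ref{T:mn} by exploiting the correspondence between embeddings $S\to U$ and rigid surjections $U\to S$ provided by Lemma~\ref{L:prexmb}: every embedding arises as the injection of at least one rigid surjection, and the injection of a rigid surjection is uniquely determined. So a coloring of embeddings can be pulled back to a coloring of rigid surjections in a way compatible with composition.

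Concretely, given $b$, $S$, $T$, I would apply Theorem~\ref{T:mn} to $b$, $S$, $T$ and take the resulting ordered tree $U$ to serve as the witness tree for Leeb's theorem. Given a $b$-coloring $c$ of embeddings $S\to U$, define an auxiliary $b$-coloring $c'$ on rigid surjections $U\to S$ by setting $c'(f)=c(e_f)$, where $e_f\colon S\to U$ denotes the injection of $f$ (which is an embedding, by the remarks following Definition~\ref{D:rs}). Applying Theorem~\ref{T:mn} to $c'$ produces a rigid surjection $g_0\colon U\to T$ such that $\{f\circ g_0 \mid f\colon T\to S \text{ a rigid surjection}\}$ is $c'$-monochromatic, say of color $\gamma$. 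Let $e_0\colon T\to U$ be the injection of $g_0$; I will show that this $e_0$ is the embedding required by Leeb's theorem.

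Fix an arbitrary embedding $d\colon S\to T$. By Lemma~\ref{L:prexmb} applied to $d$, there exists a rigid surjection $f\colon T\to S$ whose injection is $d$. By Lemma~\ref{L:coin}, the composition $f\circ g_0\colon U\to S$ is again a rigid surjection, and its injection is $e_0\circ d$. Therefore
\[
c(e_0\circ d)=c'(f\circ g_0)=\gamma,
\]
by definition of $c'$ and monochromaticity. Since $d$ was an arbitrary embedding $S\to T$, the set $\{e_0\circ d\mid d\colon S\to T \text{ an embedding}\}$ is $c$-monochromatic, which establishes the conclusion of Theorem~\ref{T:lee}.

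There is no serious obstacle here once Lemmas~\ref{L:coin} and~\ref{L:prexmb} are in hand: Lemma~\ref{L:prexmb} guarantees that every embedding that will arise on the Leeb side actually lifts to a rigid surjection in the category governed by Theorem~\ref{T:mn}, and Lemma~\ref{L:coin} ensures that the injection operation is functorial with respect to composition, so the pulled-back coloring genuinely respects the monochromatic family produced by the main theorem. The one point that must be checked carefully is that $c'$ is well-defined (i.e.\ that the injection of a rigid surjection is unique), but this is exactly the uniqueness statement recorded immediately after Definition~\ref{D:rs}.
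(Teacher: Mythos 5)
Your proof is correct and takes essentially the same approach as the paper: the paper also pulls the coloring of embeddings back to rigid surjections via the injection map, applies Theorem~\ref{T:mn}, and uses Lemma~\ref{L:prexmb} to realize each embedding $d\colon S\to T$ as the injection of some rigid surjection; you have merely spelled out the ``easy to check'' step via Lemma~\ref{L:coin} (functoriality of the injection under composition), which the paper leaves implicit.
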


To derive the above theorem from Theorem~\ref{T:mn}, given $S$ and $T$ and the number of colors, let $U$ be the ordered tree from Theorem~\ref{T:mn}.
This $U$ works also for Theorem~\ref{T:lee}. Indeed, given a coloring of all embeddings from $S$ to $T$, we assign a rigid surjection from $T$ to $S$ the color of its
injection. Theorem~\ref{T:mn} produces a rigid surjection $g_0\colon U\to S$. Let $e_0$ be the injection of $g_0$. It is easy to check, using Lemma~\ref{L:prexmb}, that the conclusion of Theorem~\ref{T:lee} holds for it.

For a natural number $n$, let $[n]$ stand for $\{ 1, \dots, n\}$.
The following is the dual Ramsey theorem of Graham and Rothschild \cite{GrRo71}.

\smallskip

\noindent {\em Given a positive integer $b$ and positive integers $k,\, l$ there exists a positive integer $m$ such that for each $b$-coloring
of all $k$ element partitions of $[m]$ there exists an $l$ element partitions $Q$ of $[m]$ such that all $k$ element partitions of $[m]$ that are
coarser than $Q$ have the same color.}

\smallskip

It was noticed already by Pr{\"o}mel and Voigt \cite{PrVo} that a restatement of the dual Ramsey theorem in terms of functions was possible.
They called s function $f\colon [n]\to [m]$ a rigid surjection if $f$ is surjective and, for each $y\in [n]$,
\[
f(y) \leq 1+ \max_{x<y} f(x)
\]
with the convention that $\max$ over the empty set is $0$.
Note that sets of the form $[n]$ for $n\in {\mathbb N}$ with their natural inequality relation and the unique
ordering of the immediate successors of each vertex are ordered trees. In fact, the tree relation and $\sqsubseteq_{[n]}$ and the
linear order relation $\leq_{[n]}$ are equal to each other. By treating $[m]$ and $[n]$ as ordered trees
$f\colon [n]\to [m]$ is a rigid surjection according to the above definition
precisely when it is a rigid surjection according to our definition of rigid surjection between trees. Indeed,
$f\colon [n]\to [m]$ that is a rigid surjection according to the above definition, the function $e\colon [m]\to [n]$ given
by $e(x) = \min f^{-1}(x)$ witnesses that $f$ is a rigid surjection according to our definition.

\begin{theorem}[Graham--Rothschild]\label{T:grro}
Let $b$ be a positive integer. Given $k$ and $l$, there exists $m$ such that for each $b$-coloring of all rigid surjections
from $[m]$ to $[k]$ there is a rigid surjection $g_0\colon [m]\to [l]$ such that
\[
\{ f\circ g_0\mid f\colon [l]\to [k]\hbox{ a rigid surjection}\}
\]
is monochromatic.
\end{theorem}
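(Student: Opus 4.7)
My plan is to obtain Theorem~\ref{T:grro} as the direct chain-restricted specialization of Theorem~\ref{T:mn}. The paragraph preceding the theorem statement already identifies $[n]$ (for any $n$) with an ordered tree in which $\sqsubseteq_{[n]}$ and $\leq_{[n]}$ coincide, and it checks that the Pr{\"o}mel--Voigt notion of rigid surjection $[n]\to[m]$ agrees with the tree-theoretic notion from Definition~\ref{D:rs}: given a Pr{\"o}mel--Voigt rigid surjection $f$, the map $e(x)=\min f^{-1}(x)$ is an embedding witnessing~\eqref{E:der}, and conversely any tree-theoretic rigid surjection between such chains is easily seen to satisfy the Pr{\"o}mel--Voigt inequality, since for a chain $\sqsubseteq$ and $\leq$ are the same relation.

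I would then apply Theorem~\ref{T:mn} with $S=[k]$ and $T=[l]$, viewed as ordered trees, and the given number of colors $b$. The theorem produces an ordered tree $U$ such that for every $b$-coloring of rigid surjections $U\to[k]$ there is a rigid surjection $g_0\colon U\to[l]$ making $\{f\circ g_0\mid f\colon[l]\to[k]\text{ a rigid surjection}\}$ monochromatic. If $U$ turns out to be a chain $[m]$ for some $m$, this is verbatim the statement of Theorem~\ref{T:grro}.

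The sole obstacle is therefore to verify that $U$ can in fact be chosen to be a chain. This is not read off from the statement of Theorem~\ref{T:mn} alone, and I would establish it by inspecting the inductive construction of $U$ in Section~\ref{S:proof}. Since that proof proceeds through the Hales--Jewett type results of Section~\ref{S:aR}, whose natural arena is words over a finite alphabet (i.e., chains), the expected picture is that each inductive step --- a reduction to a smaller pair of trees, an appeal to the abstract Ramsey machinery from~\cite{Sol}, or a direct use of Hales--Jewett --- preserves chainness when the inputs at that step are chains. Restricted to chain inputs, the proof of Theorem~\ref{T:mn} should therefore degenerate into (a reformulation of) the classical Graham--Rothschild argument, producing $U=[m]$ for the desired $m$. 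Note that one cannot simply embed an arbitrary tree $U$ into a chain, because the wedge-preservation clause in the definition of morphism forces any embedding of $U$ into $[m]$ to have $U$ itself a chain; so the derivation really does reduce to a chain-preservation property of the proof, and if it failed at some intermediate step the fallback would be to prove a marginally stronger version of Theorem~\ref{T:mn} guaranteeing a chain $U$ for chain inputs, from which Theorem~\ref{T:grro} is then immediate.
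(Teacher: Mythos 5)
Your plan reduces the derivation to the claim that the tree $U$ produced by Theorem~\ref{T:mn} for $S=[k]$, $T=[l]$ can be taken to be a chain, and you propose to verify this by inspecting the construction in Section~\ref{S:proof}, with a fallback of proving a strengthened version of Theorem~\ref{T:mn}. This is where the argument goes wrong: the $U$ produced by the proof is emphatically \emph{not} a chain in general, and no inspection will make it one. The pigeonhole step builds $V_i = T_i\otimes I$ and then $(T';x_1,\dots,x_n)\oplus(V_1,\dots,V_n)$; already when $T_i$ is a single vertex, $T_i\otimes I$ is an antichain with $|I|$ minimal elements, so $1\oplus(T_i\otimes I)$ has a root of out-degree $|I|$. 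Chainness is destroyed at the very first application of the Hales--Jewett machinery, so your primary route closes, and the ``marginally stronger'' fallback would in fact require redoing the whole argument in a restricted setting rather than being a small addendum.

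The missing idea is that you do not need $U$ to be a chain at all. The correct reduction, which is the one the paper uses, is to take $m$ to be the cardinality of $U$ and identify $[m]$ with the linear order $(U,\leq_U)$. Then observe that any tree rigid surjection $f\colon U\to[l]$ is automatically a chain rigid surjection $[m]\to[l]$: if $e\colon[l]\to U$ is the injection witnessing this, then because $[l]$ is a chain, $e$ is $\leq$-monotone and hence a chain morphism $[l]\to[m]$ (meet of a chain is min), and $e\circ f\sqsubseteq_U\mathrm{id}_U$ implies $e\circ f\leq_U\mathrm{id}_U$ since $\sqsubseteq_U$ refines $\leq_U$. Thus every rigid surjection from the tree $U$ to $[k]$ (or to $[l]$) is also a rigid surjection from the chain $[m]$, so a $b$-coloring of all chain rigid surjections $[m]\to[k]$ restricts to a $b$-coloring of all tree rigid surjections $U\to[k]$; Theorem~\ref{T:mn} then hands you $g_0\colon U\to[l]$, which is the desired chain rigid surjection $[m]\to[l]$. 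Your remark that one cannot tree-embed a non-chain $U$ into a chain is true but irrelevant: nothing in the argument requires embedding $U$ into $[m]$; one only re-reads maps \emph{out of} $U$ as maps out of $[m]$, which goes in the direction where the tree structure is forgotten rather than imposed.
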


To see how Theorem~\ref{T:grro} follows from Theorem~\ref{T:mn}, apply Theorem~\ref{T:mn} to the ordered trees $S= [k]$ and $T=[l]$ obtaining
an ordered tree $U$. Then $U$ with its linear ordering $\leq_U$ is isomorphic as a linear order to some $[m]$. For this $m$ the conclusion of Theorem~\ref{T:grro}
holds. This is immediate once we observe that a rigid surjection from $U$ to $[l]$ is also a rigid surjection from the linear order $(U, \leq_U)$, that is from $[m]$, to $[l]$.

\subsection{The context for rigid surjections---Galois connections}\label{Su:Gal}

Let $(S, \sqsubseteq_S)$ and $(T, \sqsubseteq_T)$ be two partial orders, not necessarily trees, for now.
A pair $(f,e)$ is called a {\em Galois connection} if $f\colon T\to S$, $e\colon S\to T$, and both
\begin{equation}\label{E:gin}
f\circ e \sqsubseteq_S {\rm id}_S \;\hbox{ and }\; e\circ f \sqsubseteq_T {\rm id}_T
\end{equation}
Galois connections in their abstract form were first defined by Ore in \cite{Ore}, and we essentially followed the original definition.
(Usually both $e$ and $f$ are assumed to be monotone, but we will need the broader notion here.)
For a comprehensive treatment see \cite{G-S}. As already noticed by Ore, of particular
importance are Galois connections for which equality holds in one of the inequalities in \eqref{E:gin}; such Galois connections are called perfect in \cite{Ore}.
So we are interested in Galois connections fulfilling
\begin{equation}\label{E:gin2}
f\circ e = {\rm id}_S\;\hbox{ and }\; e\circ f \sqsubseteq_T {\rm id}_T.
\end{equation}
Galois connections with \eqref{E:gin2} are often called embedding--projection pairs. They
are important in denotational semantics of programming languages, see for example \cite{DrGo}, and are relevant in some topological considerations, see
for example \cite{Ku}.

Now we consider \eqref{E:gin2} and assume that $S$ and $T$ are ordered trees.

Assuming that $f$ is a morphism puts restrictions on $e$; it is easy to see that it implies that $e$ is a morphism as well. 
Moreover, $f$ determines $e$ and $e$ determines $f$. So formulating the Ramsey statement for this kind of functions, we get Leeb's Ramsey result; 
if stated for $e$, it takes the form of Theorem~\ref{T:lee},
if stated for $f$, it takes the equivalent surjective form.

On the other hand, $e$ being a morphism does not put severe restrictions on $f$, in particular, it does not imply that $f$ is a morphism. 
In this case, $f$ is what we called a rigid surjection. The Ramsey theorem stated for such functions $f$ is our main result.

\section{The tools: abstract Ramsey theory and pigeonhole lemmas}\label{S:aR}

Theorem~\ref{T:mn} will be proved using the abstract approach to Ramsey theory developed in \cite{Sol}. In Sections~\ref{Su:ncs} and \ref{Su:bat},
we present a fragment of this approach that is sufficient for our goals here. The abstract Ramsey theorem is stated as Theorem~\ref{T:ram}. The main
difficulty in applying this theorem in concrete situations is deducing the abstract pigeonhole condition (LP).
To achieve this in our situation in later sections, we will need certain
known Hales--Jewett--type results, which we collect in Section~\ref{Su:aux}.

\subsection{Normed composition spaces}\label{Su:ncs}

Let $\A$ be a set. Assume we are given a {\em partial} function from $\A\times \A$ to $\A$:
\[
(a, b)\to a\cdot b,
\]
which is associative, that is, for $a,b,c\in \A$ if $a\cdot (b\cdot c)$ and
$(a\cdot b)\cdot c$ are both defined, then
\begin{equation}\label{E:acct}
a\cdot (b\cdot c) = (a\cdot b)\cdot c.
\end{equation}
We assume we also have a function $\partial\colon \A\to \A$ and a function $|\cdot|\colon \A\to L$, where $L$ is equipped with a partial order $\leq$.

A structure as above is called a {\em normed composition space} if the following conditions hold for $a, b, c\in \A$:
\begin{enumerate}
\item[(i)] if $a\cdot b$ and $a\cdot\partial b$ are defined, then
\begin{equation}\notag
\partial (a\cdot b) = a\cdot \partial b\, ;
\end{equation}

\item[(ii)] $|\partial a|\leq |a|$;

\item[(iii)] if $|b|\leq |c|$ and $a\cdot c$ is defined, then $a\cdot b$ is defined and $|a\cdot b|\leq |a\cdot c|$.
\end{enumerate}

The operation $\cdot$ is called a {\em multiplication}. We call $\partial$ a {\em truncation} and $|\cdot|$ a {\em  norm}.

Given $a,b\in \A$, we say that $b$ {\em extends} $a$ if for each $x\in \A$ with
$a\cdot x$ defined, we have that $b\cdot x$ is defined and that it is equal to $a\cdot x$.

For $t\in {\mathbb N}$, we write $\partial^t$ for the $t$-th iteration of $\partial$. For a subset $P$ of $\A$, we write $\partial P = \{ \partial a\mid a\in P\}$.

\subsection{Ramsey domains}\label{Su:rd}

Let ${\mathcal F}$ and ${\mathcal P}$ be families of non-empty subsets of $\A$.
Assume we have a partial function $\bullet$ from ${\mathcal F}\times {\mathcal F}$ to $\mathcal F$
with the property that if $G\bullet F$ is defined, then it is given point-wise, that is,
$f\cdot g$ is defined for all $f\in F$ and $g\in G$, and
\[
F\bullet G = \{ f\cdot g\colon f\in F,\, g\in G\}.
\]
Assume we also have a partial function from ${\mathcal F}\times {\mathcal P}$ to
${\mathcal P}$, $(F, P)\to F\dbullet P$, such that if $F\dbullet P$ is defined, then $f\cdot x$ is defined for all $f\in F$
and $x\in P$ and
\[
F\dbullet P = \{ f\cdot x\colon f\in F,\, x\in P\}.
\]

The structure $({\mathcal F}, {\mathcal P}, \dbullet, \bullet)$ as above is called a {\em Ramsey domain} over the normed composition space
$(\A, \cdot, \partial, |\cdot |)$ if sets in $\mathcal P$ are finite and the following conditions hold:
\begin{enumerate}
\item[{\bf (A)}] if $F,G\in {\mathcal F}$, $P\in {\mathcal P}$, and
$F\dbullet (G\dbullet P)$ is defined, then so is $(F\bullet
G)\dbullet P$;

\item[{\bf (B)}] if $P\in {\mathcal P}$, then $\partial P\in {\mathcal
P}$;

\item[{\bf (C)}] if $F\in {\mathcal F}$, $P\in {\mathcal P}$, and
$F\dbullet\partial P$ is defined, then there is $G\in {\mathcal
F}$ such that $G\dbullet P$ is defined and for each $f\in F$
there is $g\in G$ extending $f$.
\end{enumerate}

A Ramsey domain as above is called {\em vanishing} if for each $P\in {\mathcal P}$ there is $t\in {\mathbb N}$ such that
$\partial^t P$ has only one element. It is called {\em linear} if $\{ |x|\colon x\in P\}$ is a linear subset of $L$ for each $P\in {\mathcal P}$.

\subsection{Abstract Ramsey theorem}\label{Su:bat}

The following condition is our Ramsey statement:
\begin{enumerate}
\item[{\bf (R)}] given a natural number $b>0$, for each $P\in {\mathcal P}$, there is an
$F\in {\mathcal F}$ such that $F\dbullet P$ is defined, and for
every $b$-coloring of $F\dbullet P$ there is an $f\in F$ such that
$f\cdot P$ is monochromatic.
\end{enumerate}

For $P\subseteq \A$ and $y\in \A$, put
\begin{equation}\notag
P^{y} = \{ x\in P \mid \partial x = y\}.
\end{equation}
For $F\subseteq \A$ and $a\in \A$, let
\begin{equation}\notag
F_a = \{ f\in F\mid f\hbox{ extends }a\}.
\end{equation}

The following criterion is our pigeonhole principle:
\begin{enumerate}
\item[{\bf (LP)}] given a natural number $b>0$, for all $P\in {\mathcal P}$ and $y\in
\partial P$, there are $F\in {\mathcal F}$ and $a\in \A$ such
that $F\dbullet P$ is defined, $a\cdot y$ is defined, and for
every $b$-coloring of $F_a\cdot P^{y}$ there is an $f\in F_a$ such that
$f\cdot P^{y}$ is monochromatic.
\end{enumerate}

The theorem below is the main abstract Ramsey theorem stating that,
under appropriate conditions, the pigeonhole principle implies the Ramsey statement. It is proved in \cite[Theorem~5.3]{Sol}.

\begin{theorem}\label{T:ram}
Let $({\mathcal F}, {\mathcal P},\bullet, \dbullet)$ be a vanishing linear Ramsey domain over a normed composition space.
Then (LP) implies (R).
\end{theorem}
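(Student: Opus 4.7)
The plan is to prove Theorem~\ref{T:ram} by induction on the vanishing index $t(P):=\min\{t\in{\mathbb N}:|\partial^{t}P|=1\}$, which is well defined by the vanishing assumption. Axiom (B) gives $\partial P\in{\mathcal P}$ with $t(\partial P)=t(P)-1$, so the inductive hypothesis will apply cleanly to $\partial P$.

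For the base case $t(P)\le 1$, the set $\partial P$ is a singleton $\{y\}$, hence $P=P^{y}$. Applying (LP) to $P$ and $y$ yields $F\in{\mathcal F}$ and $a\in\A$ with $F\dbullet P$ defined, $a\cdot y$ defined, and the property that every $b$-coloring of $F_{a}\cdot P^{y}=F_{a}\cdot P$ admits some $f\in F_{a}$ with $f\cdot P$ monochromatic. Since $F_{a}\cdot P\subseteq F\dbullet P$, any $b$-coloring of $F\dbullet P$ restricts to this fiber, so $F$ witnesses (R) for $P$.

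For the inductive step, let $t(P)\ge 2$. Using linearity, enumerate $\partial P=\{y_{1},\ldots,y_{k}\}$ in order of increasing norm, and partition $P$ accordingly into the fibers $P^{y_{1}},\ldots,P^{y_{k}}$. The first step is to handle the fibers one at a time: iteratively apply (LP) to the pairs $(P,y_{i})$, each application contributing a family $F_{i}$ and an anchor $a_{i}$, and combine these via the multiplication $\bullet$ on ${\mathcal F}$, using associativity (A) repeatedly. This produces a single family $F'$ and an anchor $a'$ such that in every $b$-coloring of $F'\dbullet P$ some $f'\in F'_{a'}$ makes each $f'\cdot P^{y_{i}}$ monochromatic of a color $c(y_{i})$; in effect, the coloring of $P$ has been compressed into a $b$-coloring of $\partial P$. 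The second step is to apply the inductive hypothesis to $\partial P$, obtaining $F_{0}\in{\mathcal F}$ Ramsey for $\partial P$, and then to use axiom (C) to lift $F_{0}$ to $G_{0}\in{\mathcal F}$ with $G_{0}\dbullet P$ defined and each element of $F_{0}$ extended by an element of $G_{0}$. The family $F:=F'\bullet G_{0}$ then witnesses (R) for $P$: by axiom (A), $F\dbullet P=F'\dbullet(G_{0}\dbullet P)$, and the two steps compose so that $F'$ forces monochromaticity on each fiber while $G_{0}$, descending to $F_{0}$, forces monochromaticity across the induced coloring on $\partial P$, delivering a uniformly monochromatic $f\cdot P$.

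The main obstacle will be the bookkeeping in the iterative step. Each invocation of (LP) attaches an extension constraint ``$f$ extends $a_{i}$'', and these constraints must remain simultaneously satisfiable after repeated $\bullet$-composition and after the eventual interaction with the lifted $G_{0}$. The extension relation on $\A$, together with associativity (A) and the upgrading axiom (C), is exactly what keeps these constraints compatible, while the linearity of norms on $P$ is what provides a well-defined ordering of the fibers along which to iterate. Verifying that these compatibilities line up with the point-wise nature of $\dbullet$ and $\bullet$, and matching the compressed coloring in the second step correctly to $F_{0}$, is the technical heart of the argument.
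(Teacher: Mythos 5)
The paper does not prove this statement; it is imported verbatim from \cite[Theorem~5.3]{Sol}. There is therefore no in-document proof to compare against, but your sketch, judged on its own merits, has two genuine gaps in the inductive step.

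The first is an ordering problem. You build $F'$ by applying (LP) to the pairs $(P,y_i)$, so that $F'\dbullet P$ is defined, and independently build $G_0$ by lifting $F_0$ via (C) so that $G_0\dbullet P$ is defined; you then set $F=F'\bullet G_0$ and invoke (A) to get $F\dbullet P = F'\dbullet (G_0\dbullet P)$. But (A) only transfers definedness from $F'\dbullet (G_0\dbullet P)$ to $(F'\bullet G_0)\dbullet P$, and having $F'\dbullet P$ defined gives no information about whether $F'\dbullet (G_0\dbullet P)$ is defined: $G_0\dbullet P$ is a different element of $\mathcal P$ than $P$, and nothing in the axioms forces $F'$ to be composable with it. The inductive step has to run in the opposite order --- first secure $F_0$ for $\partial P$ by the inductive hypothesis, lift it to $G_0$ via (C), and only then perform the pigeonhole analysis on $G_0\dbullet P$ itself, not on $P$.

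The second gap is more substantial. You assert that iterating (LP) over $(P,y_1),\dots,(P,y_k)$ and multiplying the resulting families produces a single $F'$ and anchor $a'$ such that, for any coloring of $F'\dbullet P$, one $f'\in F'_{a'}$ simultaneously makes every fiber $f'\cdot P^{y_i}$ monochromatic. This does not follow from separate applications of (LP). Each instance of (LP) controls colorings of $F_{i,a_i}\cdot P^{y_i}$ in isolation; after you compose on the left with $F_1$, the colorings relevant to the fiber over $y_2$ live on $F_1\dbullet(F_{2,a_2}\cdot P^{y_2})$, which is not what the (LP) instance for $(P,y_2)$ speaks to, and the witness it produces need not be compatible with the one already fixed for $y_1$. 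What is actually required is a nested fusion construction: apply (LP) once to a single fiber of $G_0\dbullet P$, replace the ambient set by the resulting $F_k\dbullet(G_0\dbullet P)$, apply (LP) again to a fiber of \emph{that} set, and so on, all the while checking that the accumulated anchor constraints remain jointly satisfiable through the extension relation and (C). Linearity and axiom (iii) of the normed composition space are what make it possible to process the fibers in a definite norm order so that the required multiplications stay defined at every stage. This nested bookkeeping is the technical heart of the theorem and is precisely what your phrase ``combine via $\bullet$'' elides; you flag the issue at the end of your write-up, but flagging the obstacle is not the same as overcoming it.
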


\subsection{Concrete pigeonhole lemmas}\label{Su:aux}

We formulate here lemmas that will be used to prove condition (LP) for the concrete Ramsey domain defined later. They are restatements of known results.

The first lemma is a version of the Hales--Jewett theorem in disguise; for a proof apply statement (HL2) with $t=1$
from \cite[Appendix 2]{So2} and the standard pigeonhole principle. (The lemma follows also from Leeb's theorem stated as Theorem~\ref{T:lee} in the present paper, 
but it is simpler than this theorem, 
as it is a rephrasing of the Hales--Jewett theorem stated as in \cite[Appendix 2]{So2}.) 

\begin{lemma}\label{L:prfrco}
Let $b>0$. Let $S$ be an ordered tree and let $v_0$ be its root. There exists an ordered tree $S'$ such that for each $b$-coloring of vertices of $S'$
there is an embedding $i\colon S\to S'$ such that all elements of $i(S\setminus \{ v_0\})$ have the same color.
\end{lemma}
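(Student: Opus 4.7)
The plan is to reduce the lemma to a tree Hales--Jewett-type statement, exactly as suggested: invoke statement (HL2) with $t=1$ from \cite[Appendix 2]{So2}, together with the standard finite pigeonhole principle. The key preliminary observation is that every embedding of ordered trees sends root to root by clause (iii) in the definition of morphism, so the image of $S\setminus\{v_0\}$ under any embedding $i\colon S\to S'$ is automatically contained in the set of non-root vertices of $S'$. Consequently, the color of the root of $S'$ is immaterial for the conclusion, and the lemma is equivalent to the variant in which one $b$-colors only the non-root vertices.

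Given that observation, I would apply (HL2) with $t=1$ to $S$ and to the color count $b$. In the formulation of \cite[Appendix 2]{So2}, this Hales--Jewett-type result produces an ordered tree $\tilde S$ with the property that any $b$-coloring of its non-root vertices admits a root-preserving embedding of $S$ into $\tilde S$ whose non-root image is monochromatic. Setting $S'=\tilde S$, the lemma then follows: given a $b$-coloring $c$ of all vertices of $S'$, restrict $c$ to the non-root vertices and apply (HL2) to extract an embedding $i\colon S\to S'$, whose image $i(S\setminus\{v_0\})$ is then monochromatic under $c$.

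The role of the standard pigeonhole principle is to bridge any gap between the raw output of (HL2) in its literal form and the single-color conclusion we need. For instance, if (HL2) with $t=1$ delivers an embedding whose non-root image is merely split into a bounded number of color classes (one per level, say), one inflates the color count $b$ to cover all such class configurations and then applies finite pigeonhole to pick out a monochromatic class; this is the standard ``iterate Hales--Jewett, then pigeonhole'' template.

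The main obstacle is not conceptual but lies in the careful bookkeeping: one must verify that the formulation of (HL2) with $t=1$ in the cited appendix indeed specializes to the root-preserving Ramsey statement above, and track the growth of the color parameter through the pigeonhole upgrade. Once those points are settled, the derivation is routine.
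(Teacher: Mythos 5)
Your proposal matches the paper's own argument, which is itself only a one-line pointer: apply statement (HL2) with $t=1$ from \cite[Appendix 2]{So2} and the standard pigeonhole principle. You correctly identified these two ingredients and the preliminary reduction (embeddings preserve roots, so the root's color is irrelevant); the remaining bookkeeping you flag is exactly what the paper also leaves implicit.
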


For linear orders $A$ and $L$, let
\[
A\oplus L
\]
be the linear order obtained by putting the linear order of $L$ on top of the linear order of $A$.
We consider $A$ and $L$ to be included in $A\oplus L$. Let
\[
A\oplus 1
\]
stand for $A\oplus L$, where $L$ is the linear order consisting of one element.

Fix linear orders $A$, $L$, and $I$. We consider $L\times I$ as linearly ordered by the lexicographic order.
For a function
\[
p\colon A\oplus (L\times I) \to A\oplus L
\]
we will be interested in the following property
\begin{equation}\label{E:spfu}
p\res A = {\rm id}_A\;\hbox{ and }\;
\forall x\in L\; x\in p[\{ x\}\times I] \subseteq A\cup \{ x\}.
\end{equation}
Each such $p$ is a rigid surjection. To clarify condition \eqref{E:spfu}, note that
for each $x\in L$, the set $\{ x\}\times I$ is an interval
in the linear order $L\times I$. The second part of condition \eqref{E:spfu} says that on that interval
the only values possibly attained by $p$ are $x$ and
points in $A$, and $x$ is actually attained.

For an element $x$ of a linear order, let
\[
x-
\]
stand for the immediate predecessor of $x$, if there is one. For a linear order $L$ and $x\in L$,
let
\begin{equation}\label{E:cutt}
L^x
\end{equation}
stand for the linear order on $L$ restricted to the set $\{ y\in L\mid y\leq_L x\}$.

We use the above notions to isolate, in Lemma~\ref{L:prHJ} below, the version of the Hales--Jewett theorem we will need. It
is a version of the left-variable word Hales--Jewett theorem. This particular statement is essentially proved in
\cite[Section 8.1]{Sol}. We will explain it precisely in the proof below.

\begin{lemma}\label{L:prHJ}
Let $b>0$. Let two linear orders $A$ and $L$ be given with $A$ non-empty. There is a linear order $I$ such that for each $b$-coloring
of all functions from $(A\oplus (L\times I))^{y-}$ to $A$, that are identity on $A$ and where we allow $y$ to vary over $L\times I$, there is
\[
p\colon A\oplus  (L\times I) \to A\oplus  L
\]
with property \eqref{E:spfu} and such that the color of
\[
r\circ (p\res \{ z\in A \oplus (L\times I) \colon z<_{A \oplus (L\times I)}  \min p^{-1}(x)\}),
\]
where $r\colon (A\oplus L)^{x-} \to A$ and $r\res A = {\rm id}_A$,
depends only on $x\in L$.
\end{lemma}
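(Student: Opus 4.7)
This lemma is a multivariable left-variable Hales--Jewett statement in disguise. View the elements of $A$ as letters and the elements of $L$ as variables, with the constraint that variable $x$ may appear only in its own column $\{x\}\times I$ (and must appear at least once there). A function $p\colon A\oplus (L\times I)\to A\oplus L$ satisfying \eqref{E:spfu} is then a ``variable word,'' and composition with a map $r\colon (A\oplus L)^{x-}\to A$ extending $\mathrm{id}_A$ substitutes the letter $r(x')\in A$ for each occurrence of the variable $x'<x$. The goal is to choose $p$ so that, for every $x\in L$, all the $r$-substitutions of the prefix of $p$ below the first occurrence of $x$ yield the same color.

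\textbf{The induction.} I induct on $|L|$. The base case $|L|=0$ is vacuous. For the inductive step, enumerate $L=\{x_1<\cdots<x_n\}$ and build $I$ together with $p$ column by column. At stage $k\in\{1,\ldots,n\}$, assuming the linear orders $I_{x_1},\ldots,I_{x_{k-1}}$ have already been chosen and $p$ has been defined on $A$ and on each earlier column $\{x_j\}\times I_{x_j}$ so as to satisfy the color conditions at $x_1,\ldots,x_{k-1}$, I choose $I_{x_k}$ and extend $p$ to $\{x_k\}\times I_{x_k}$---with values in $A\cup\{x_k\}$ and $x_k$ attained at least once---so that the color of $r\circ\bigl(p\res\{z:z<\min p^{-1}(x_k)\}\bigr)$ is independent of $r\colon L^{x_k-}\to A$. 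Since the prefix contains variables $x_1,\ldots,x_{k-1}$ at positions already fixed by $p$, varying $r$ traces out a combinatorial subspace with $k-1$ variables over the alphabet $A$; iterating the single-variable left-variable Hales--Jewett theorem of \cite[Section~8.1]{Sol} once for each of these variables produces the desired $I_{x_k}$ and $p\res(\{x_k\}\times I_{x_k})$. Crucially, each iteration alters $p$ only inside the current column, so the color conditions already established for $x_1,\ldots,x_{k-1}$ are preserved.

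\textbf{Main obstacle and assembly.} The only substantive step is the iterated left-variable Hales--Jewett argument; the rest is bookkeeping. To package the column-specific linear orders into a single $I$, one may set $I=\bigsqcup_k I_{x_k}$ with any fixed ordering and extend $p$ on each $\{x_k\}\times(I\setminus I_{x_k})$ by the constant value $x_k$. This preserves \eqref{E:spfu}, and since the color condition at $x_k$ depends only on the prefix of $p$ below the first occurrence of $x_k$---which lies entirely in $\{x_k\}\times I_{x_k}$---the trivial extension does not disturb it. The remark that the lemma is ``essentially proved in \cite[Section~8.1]{Sol}'' refers to precisely this scheme.
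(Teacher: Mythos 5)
The inductive step fails. At stage $k$ you want the color of $r\circ\bigl(p\res\{z : z < \min p^{-1}(x_k)\}\bigr)$ to be independent of $r\colon (A\oplus L)^{x_k-}\to A$. But $r$ substitutes values from $A$ precisely at the positions in the already-fixed columns $\{x_1\}\times I_{x_1},\ldots,\{x_{k-1}\}\times I_{x_{k-1}}$ where $p$ takes the values $x_1,\ldots,x_{k-1}$. The part of column $\{x_k\}\times I_{x_k}$ that you are now free to choose lies \emph{before} $\min p^{-1}(x_k)$, so $p$ takes only values in $A$ there; adding such positions neither moves nor creates any variable position, so the combinatorial subspace traced out by $r$ is completely determined by the earlier columns and is untouched by your choices at stage $k$. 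No Hales--Jewett argument can force a \emph{fixed} subspace to be monochromatic. Concretely, if $|I_{x_1}|=1$ then $p$ on $\{x_1\}\times I_{x_1}$ must be the single value $x_1$, and a coloring that reads off the value of $f$ at that position makes the $x_2$-condition impossible no matter how you continue. Your closing remark --- that ``each iteration alters $p$ only inside the current column, so the color conditions already established are preserved'' --- is true, but it is exactly the symptom of the problem: the alterations you allow yourself also cannot help with the condition you are trying to establish.

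The paper's own proof is not an induction on $|L|$ at all: it simply observes that the required multivariable statement, with possibly different linear orders $I_y$ for each $y\in L$, is already proved in \cite[Section~8.1, Lemma~8.1]{Sol}, and the only remaining step is to replace the $I_y$'s by a single common $I$ by enlarging each to the size of the largest. Your final assembly paragraph is close in spirit to that last step, but the column-by-column engine that is supposed to produce the $I_y$'s and the restrictions $p\res(\{x_k\}\times I_{x_k})$ is where the argument genuinely breaks. To make an induction of this flavor work, the earlier columns must be chosen with the \emph{future} conditions at $x_{k+1},\ldots,x_n$ in mind (a Hales--Jewett style nested/deferred argument), which is precisely what \cite[Lemma~8.1]{Sol} encodes and what a per-column greedy construction cannot replicate.
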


\begin{proof} Given linear orders $I_y$, for $y\in L$, where $L$ is a linear order, let $\bigoplus_{y\in L} I_y$ be the linear order
on the disjoint union $\bigcup_{y\in L} I_y$ that on each set $I_y$ coincides with the order with which this set is equipped and makes all
elements of $I_y$ smaller that all elements of $I_{y'}$ if $y<_Ly'$.

An inspection of the proof of the Hales--Jewett theorem in \cite[Section~8.1, Lemma~8.1]{Sol} reveals that the following
statement is proved there.

\noindent For $b>0$ and two linear orders $A$ and $L$ with $A$ non-empty,
there exist linear orders $I_y$, for $y\in L$, such that for each
$b$-coloring of all functions from $(A \oplus \bigoplus_{y\in L} I_y)^{x-}$ to $A$ that are identity on $A$,
with $x\in \bigoplus_{y\in L} I_y$,
there is a function $p\colon A \oplus \bigoplus_{y\in L} I_y \to A\oplus L$ such that $y\in p(I_y) \subseteq A\cup \{ y\}$ and, for each
$r\colon (A\oplus L)^{x-}\to A$, with $x\in L$ and with $r\res A = {\rm id}_A$, the color of
\[
r\circ (p\res \{ z\in A \oplus \bigoplus_{y\in L} I_y \colon z<_{(A \oplus \bigoplus_{y\in L} I_y)} \min p^{-1}(x)\})
\]
depends only on $x$.

It is clear that we can make $I_y= I$, for some linear order $I$ and for
all $y\in L$, by enlarging each of them to the size of the largest linear order among the $I_y$-s. So we have $\bigoplus_{y\in L} I_y  = L\times I$, as needed
in the conclusion of the lemma.
\end{proof}

\section{The proof of Theorem~\ref{T:mn}}\label{S:proof}

In this section, first, we apply the abstract approach as outlined in Section~\ref{S:aR} to prove Proposition~\ref{P:almo}, which is a version of
Theorem~\ref{T:mn} for a certain subclass of rigid surjections and which may be of some independent interest. Then we
deduce full Theorem~\ref{T:mn} from this particular case. One of the technically important points in applying the abstract approach is finding truncation operations.
We find two truncations, one in Section~\ref{Su:ras}, the other one in Section~\ref{Su:lot}. The first one will be used to prove Proposition~\ref{P:almo},
the second one to carry over the result to arbitrary rigid surjections in Theorem~\ref{T:mn}.

In Section~\ref{Su:ras}, we introduce the particular type of rigid surjections, we call sealed, and we state, as Proposition~\ref{P:almo}, a result analogous
to Theorem~\ref{T:mn} for such rigid surjections. In Sections~\ref{Su:raP} and \ref{Su:lpP}, we prove Proposition~\ref{P:almo}. Then in Section~\ref{Su:setsu} we
derive Theorem~\ref{T:mn} from Proposition~\ref{P:almo}.

\subsection{A Ramsey result for sealed rigid surjections}\label{Su:ras}

First we note a simple result on arbitrary rigid surjections. Let $T$ be an ordered tree. A non-empty set $T'\subseteq T$ is called a {\em subtree} if
it is closed downward with respect to $\sqsubseteq_T$, that is, if $w\in T'$, $v\in T$, and $v\sqsubseteq_T w$, then $v\in T'$.

\begin{lemma}\label{L:subt}
Let $S$, $T$ be ordered trees and let $f\colon T\to S$ be a rigid surjection. Let $T'$ be a subtree of $T$. Then $f[T']$ is a subtree
of $S$ and $f\res T'\colon T'\to f[T']$ is a rigid surjection.
\end{lemma}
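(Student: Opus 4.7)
The plan is to use the injection $e\colon S\to T$ of $f$ as the source of a witness morphism $e'\colon f[T']\to T'$ for $f\res T'$, by simply restricting $e$.

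First I would pin down two preliminary facts. (a) For any $w\in T'$, the Galois condition gives $e(f(w))\sqsubseteq_T w$, so downward-closedness of $T'$ yields $e(f(w))\in T'$. (b) Since $e$ is a morphism, it preserves $\wedge$ and hence is monotone with respect to $\sqsubseteq$: if $v'\sqsubseteq_S v$, then $v'\wedge_S v=v'$, so $e(v')=e(v')\wedge_T e(v)\sqsubseteq_T e(v)$. These two observations together handle the downward-closedness of $f[T']$: given $v=f(w)\in f[T']$ with $w\in T'$ and $v'\sqsubseteq_S v$, I would note $e(v')\sqsubseteq_T e(v)\sqsubseteq_T w$, so $e(v')\in T'$ by downward closure, whence $v'=f(e(v'))\in f[T']$.

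Next I would check that the inherited ordered-tree structures behave nicely under restriction. Since $T'$ is downward closed, $v\wedge_T w\in T'$ for all $v,w\in T'$, so $\wedge_{T'}$ agrees with $\wedge_T\res T'$; also the immediate successors of any $v\in T'$ taken in $T'$ coincide with those taken in $T$ that land in $T'$ (again by downward closure), so the linear order on $\mathrm{im}_{T'}(v)$ is the restriction of that on $\mathrm{im}_T(v)$, and hence $\leq_{T'}$ is $\leq_T\res T'$. The same applies to $f[T']\subseteq S$. Also, $T'$ contains the root of $T$ (being nonempty and downward closed), and $f(\mathrm{root}_T)=\mathrm{root}_S$ since $e(f(\mathrm{root}_T))\sqsubseteq_T\mathrm{root}_T$ forces $e(f(\mathrm{root}_T))=\mathrm{root}_T=e(\mathrm{root}_S)$ and $e$ is injective; so $\mathrm{root}_S\in f[T']$, which is thus the root of the subtree $f[T']$.

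Finally I would set $e':=e\res f[T']$. By (a), $e'$ takes values in $T'$. Because the meet, lex order, and root all restrict from $S,T$ to $f[T'],T'$, the morphism clauses (i)--(iii) for $e'$ follow immediately from those for $e$. The equations $(f\res T')\circ e'=\mathrm{id}_{f[T']}$ and $e'\circ(f\res T')\sqsubseteq_{T'}\mathrm{id}_{T'}$ are inherited directly from $f\circ e=\mathrm{id}_S$ and $e\circ f\sqsubseteq_T\mathrm{id}_T$, noting by (a) that the image of $e'\circ(f\res T')$ does sit inside $T'$. Hence $f\res T'$ is a rigid surjection onto $f[T']$ with injection $e'$. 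There is really no serious obstacle here; the only point requiring any care is verifying that the ordered-tree data (meet, lex order, root) on a subtree coincide with the restriction of the ambient data, which is where downward-closedness is essential.
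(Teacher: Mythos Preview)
Your proof is correct and follows essentially the same approach as the paper's: both show $f[T']$ is downward closed by pushing a predecessor $v'\sqsubseteq_S f(w)$ through the injection to get $e(v')\sqsubseteq_T e(f(w))\sqsubseteq_T w\in T'$ and then applying $f\circ e=\mathrm{id}_S$, and both witness that $f\res T'$ is a rigid surjection by restricting the injection. You simply fill in more of the routine verifications (compatibility of $\wedge$, $\leq$, and the root under restriction) that the paper leaves as ``obvious.''
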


\begin{proof} Let $i\colon S\to T$ be the injection of $f$. Let $w\in T'$ and let $v\in S$ be such that $v\sqsubseteq_S f(w)$. Since $i$ is an embedding
and since $i$ is an injection of $f$, we have
\[
i(v)\sqsubseteq_T i(f(w)) \sqsubseteq_T w.
\]
Thus, $i(v)\in T'$. Using again the fact that $i$ is the injection of $f$, we have
\[
v=f(i(v)) \in f[T'].
\]
So $f[T']$ is a subtree.

To check that $f\res T'\colon T'\to f[T']$ is a rigid surjection, note that since for $T'$ is closed downward with respect to $\sqsubseteq_T$ and
since $i(f(w))\sqsubseteq_Tw$ for $w\in T$, we have that $i[f[T']]\subseteq T'$. It is now obvious that $i\res f[T']\colon f[T']\to T'$ is an embedding
which is the injection of $f\res T'$.
\end{proof}

A rigid surjection $f\colon T\to S$ is called {\em sealed} if its injection maps the $\leq_S$-largest leaf of $S$ to the $\leq_T$-largest leaf of $T$.

For an ordered tree $S$ and $v\in S$, let
\begin{equation}\label{E:rest}
S^v = \{ w\in S\mid w\leq_S v\}.
\end{equation}
Note that this definition extends \eqref{E:cutt}.
It is clear that $S^v$ is closed under taking predecessors in $S$. We call trees of the form $S^v$, $v\in S$, {\em initial subtrees of $S$}.
If $f\colon T\to S$ is a rigid surjection and $v\in S$, then let
\begin{equation}\label{E:forres}
f^v = f\res T^{i(v)},
\end{equation}
where $i$ is the injection of $f$. We note the following lemma.

\begin{lemma}\label{L:ran}
Let $f\colon T\to S$ be a rigid surjection, let $i$ be its injection, and let $v\in S$. Then the domain of $f^v$ is $T^{i(v)}$ and the image
of $T^{i(v)}$ under $f^v$ is $S^v$, and $f^v$ is a sealed rigid surjection.
\end{lemma}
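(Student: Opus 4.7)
The plan is to verify the three assertions in sequence: first that the domain of $f^v$ is $T^{i(v)}$, which is immediate from definition \eqref{E:forres}; second that $f^v[T^{i(v)}] = S^v$; and third that $f^v \colon T^{i(v)} \to S^v$ is a sealed rigid surjection.

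For the image identity I would argue two inclusions. For $S^v \subseteq f^v[T^{i(v)}]$: if $u \leq_S v$ then, since $i$ is a morphism and hence monotone with respect to $\leq$, $i(u) \leq_T i(v)$, so $i(u) \in T^{i(v)}$ and $f(i(u)) = u$. For the reverse inclusion, given $w \in T^{i(v)}$, the relation $i(f(w)) \sqsubseteq_T w$ (valid because $i$ is the injection of $f$) yields $i(f(w)) \leq_T w \leq_T i(v)$; then I would invoke the fact that for an embedding $i\colon S \to T$ one has $i(u) \leq_T i(v)$ iff $u \leq_S v$, to conclude $f(w) \leq_S v$. The nontrivial direction of this equivalence comes from linearity of $\leq_S$: if $u \not\leq_S v$ then $v <_S u$, so $i(v) \leq_T i(u)$, which combined with $i(u) \leq_T i(v)$ forces $i(u) = i(v)$, hence $u = v$ by injectivity, a contradiction.

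For the final assertion I would apply Lemma~\ref{L:subt} with the subtree $T^{i(v)}$ (which is $\sqsubseteq_T$-downward closed, as noted just before the lemma). That lemma delivers a rigid surjection $f^v \colon T^{i(v)} \to f[T^{i(v)}] = S^v$, and inspecting its proof the injection of $f^v$ is $i\res S^v$. To check sealedness, observe that $v$ is the $\leq_S$-maximum of $S^v$ and is a leaf of $S^v$: any $\sqsubseteq_S$-successor of $v$ in $S$ strictly exceeds $v$ in $\leq_S$ and so does not lie in $S^v$. The same argument shows $i(v)$ is the $\leq_T$-largest leaf of $T^{i(v)}$. Since $(i\res S^v)(v) = i(v)$, the injection of $f^v$ maps the $\leq_S$-largest leaf of $S^v$ to the $\leq_T$-largest leaf of $T^{i(v)}$, so $f^v$ is sealed.

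The only non-bookkeeping point is the equivalence $u \leq_S v \iff i(u) \leq_T i(v)$ for embeddings used in the second paragraph; everything else is unwinding of definitions and a direct appeal to Lemma~\ref{L:subt}.
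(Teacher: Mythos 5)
Your proof is correct and follows essentially the same route as the paper: reduce to Lemma~\ref{L:subt}, check the inclusion $S^v \subseteq f[T^{i(v)}]$ by applying $i$, and read off the injection of $f^v$ from the proof of Lemma~\ref{L:subt}. The paper's version is terser and explicitly records only the inclusion $S^v \subseteq f[T^{i(v)}]$, treating both $f[T^{i(v)}] \subseteq S^v$ and sealedness as consequences of the definitions and Lemma~\ref{L:subt}; you spell those two points out, which is fine and does not change the substance of the argument.
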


\begin{proof} By Lemma~\ref{L:subt}, only $S^v\subseteq f[T^{i(v)}]$ needs justifying. But note that for $w\in S^v$ we have $w\leq_S v$, so
$i(w)\in T^{i(v)}$, hence $w=f(i(w)) \in f[T^{i(v)}]$ as required.
\end{proof}

Or first aim, accomplished in Sections~\ref{Su:raP}--\ref{Su:lpP} is to prove the following proposition. Later, in Section~\ref{Su:setsu}, we show how to
derive Theorem~\ref{T:mn} from this proposition.

\begin{proposition}\label{P:almo}
Let $b>0$. Let $S, T$ be ordered trees. There is an ordered tree $V$ such that for each $b$-coloring of all
sealed rigid surjections from some $V^v$ to $S$, as $v$ varies over $V$, there is $v_0\in V$ and a sealed rigid surjection
$g\colon V^{v_0}\to T$ such that
\[
\{ f\circ g^t\mid f\colon T^t\to S\hbox{ a sealed rigid surjection},\, t\in T\}
\]
is monochromatic.
\end{proposition}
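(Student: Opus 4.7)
The plan is to realize Proposition~\ref{P:almo} as an instance of the Ramsey statement (R) for a carefully chosen Ramsey domain over a suitable normed composition space, and then invoke Theorem~\ref{T:ram}. I will take the underlying set $\A$ of the normed composition space to consist of all sealed rigid surjections between ordered trees, with multiplication derived from function composition: for sealed rigid surjections $g\colon V'\to T'$ and $f\colon (T')^t\to S'$, I set $g\cdot f = f\circ g^t$, which by Lemmas~\ref{L:coin} and \ref{L:ran} is again a sealed rigid surjection from $(V')^{j(t)}$ to $S'$, with $j$ the injection of $g$; associativity reduces to associativity of composition. The truncation $\partial$ will strip off the top of the codomain: for $f\colon T\to S$ sealed with $\leq_S$-largest leaf $\ell_S$, the sealed condition together with $e\circ f\sqsubseteq_T{\rm id}_T$ forces $f^{-1}(\ell_S)=\{\ell_T\}$ (for $\ell_T$ the $\leq_T$-largest leaf), so I define $\partial f=f^{v}$, where $v$ is the $\leq_S$-predecessor of $\ell_S$; Lemma~\ref{L:ran} then certifies that $\partial f$ is a sealed rigid surjection from $T^{i(v)}$ onto $S\setminus\{\ell_S\}$. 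The norm $|f|$ will record the codomain of $f$ in a partial order of ordered trees chosen to make axioms (ii) and (iii) of a normed composition space automatic.

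The Ramsey domain will have $\mathcal{P}$ consisting of sets of the form $P_{T,S}=\{f\colon T^t\to S\text{ a sealed rigid surjection},\ t\in T\}$, which are precisely the objects colored in the proposition, while $\mathcal{F}$ will consist of analogous sets $F_{V,T'}=\{g\colon V^v\to T'\text{ a sealed rigid surjection},\ v\in V\}$ for ordered trees $V, T'$. Verifying the axioms (A), (B), (C) of a Ramsey domain is largely bookkeeping using Lemmas~\ref{L:subt}, \ref{L:ran}, and \ref{L:coin}. Linearity holds because the codomains appearing in a fixed $P_{T,S}$ are all initial subtrees of $S$, hence chained by $\leq_S$; vanishing holds because iterated application of $\partial$ eventually contracts the codomain to its root.

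The main obstacle will be verifying the local pigeonhole condition (LP) for this Ramsey domain. Fix $P=P_{T,S}$ and $y\in\partial P$; then $y$ is a sealed rigid surjection from some $T^{i(v)}$ onto $S\setminus\{\ell_S\}$, and $P^y$ consists of all extensions of $y$ to a sealed rigid surjection $f\colon T^t\to S$ with $f^v=y$, where $t\in T$ ranges over nodes whose initial subtree properly contains $T^{i(v)}$. Such an extension is specified by the choice of $t$ together with the values of $f$ on the nodes of $T^t\setminus T^{i(v)}$, subject to the rigidity and monotonicity constraints: exactly one such node (the $\leq_T$-largest leaf of $T^t$, which must coincide with $t$) is forced to map to $\ell_S$, while the remaining new nodes must map into $S\setminus\{\ell_S\}$ consistently with the injection of $y$. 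I will encode this freedom into the framework of Lemma~\ref{L:prHJ}: the set $A$ will record the admissible ``non-new'' values available to such an extension, $L$ will index the candidate positions within the large tree $V^{v_0}$ at which the top leaf can live, and $I$ will be the index set produced by the lemma. The functions $p$ with property~\eqref{E:spfu} supplied by Lemma~\ref{L:prHJ} will then lift, via Lemma~\ref{L:prexmb}, to a sealed rigid surjection $g\in F$ whose restrictions parametrize all elements of $F_a\cdot P^y$, with $a$ recording the data forcing agreement with $y$ below. This translation from the tree-theoretic extension problem into a left-variable word Hales--Jewett statement is the technical heart of the argument; once it is in place, Theorem~\ref{T:ram} yields the Ramsey statement (R), which is precisely the conclusion of Proposition~\ref{P:almo}.
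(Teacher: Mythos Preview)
Your high-level strategy matches the paper's: set up a normed composition space on sealed rigid surjections with multiplication $g\cdot f=f\circ g^t$, truncation $\partial f=f^v$ for $v$ the second $\leq_S$-largest vertex, build the Ramsey domain, and invoke Theorem~\ref{T:ram}. That scaffolding is correct.

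There is, however, a genuine error in your norm. You propose $|f|=\text{codomain}(f)$, but this fails axiom (iii): knowing that the codomain of $f'$ sits below that of $f$ tells you nothing about the \emph{domain} of $f'$, so you cannot conclude that $g\cdot f'$ is defined whenever $g\cdot f$ is. The paper takes $|f|=\mathrm{dom}(f)$, valued in the poset $\mathcal{L}$ of initial subtrees of trees from a fixed disjoint family $\mathcal{T}$; then $|f'|\leq|f|$ literally means $\mathrm{dom}(f')=T^v\subseteq T^w=\mathrm{dom}(f)$, and both (iii) and linearity follow because the $T^w$ are linearly ordered by $\leq_T$. Your linearity argument (``codomains\ldots are all initial subtrees of $S$'') is also off: in $P_{T,S}$ every element has codomain exactly $S$.

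The more serious gap is in your verification of (LP). You write that the extension data for $P^y$ can be encoded directly into Lemma~\ref{L:prHJ}, with $A$ the admissible old values and $L$ indexing candidate positions for the top leaf, and then lifted via Lemma~\ref{L:prexmb}. This underestimates the structure of $T^t\setminus T^{i(v)}$. Once $y=s_0$ is fixed with injection $i_0$, the new nodes do not form a single linear block: they split into forests $T_1,\dots,T_n$ hanging off the path $w_2=i_0(v_2)\sqsubseteq_T\cdots\sqsubseteq_T w_1=i_0(v_1)$, and the admissible values on $T_i$ are constrained to a set $B_i=s_0[A_i]$ that depends on $i$. A single application of the linear left-variable Hales--Jewett (Lemma~\ref{L:prHJ}) cannot absorb this simultaneously. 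The paper therefore (a) passes to a product version, Lemma~\ref{L:HJpro}, handling the $n$ coordinates at once; (b) introduces the forest tensor $S\otimes I$ and the key transfer Lemma~\ref{L:trtr}, which converts a linear $p$ with property~\eqref{E:spfu} into a genuine $A$-rigid surjection $\pi_p\colon A\oplus(S\otimes I)\to A\oplus S$ on trees; and (c) finishes with the vertex-coloring Hales--Jewett for forests, Lemma~\ref{L:frco}, to kill the remaining dependence on $w\in T_1$. None of this machinery is hinted at in your plan, and Lemma~\ref{L:prexmb} (embedding $\Rightarrow$ rigid surjection) is not the right lifting tool here---it does not bridge linear-order Hales--Jewett data to tree rigid surjections; that is precisely the role of Lemma~\ref{L:trtr}. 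As written, your (LP) step would not go through.
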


\subsection{Ramsey theoretic structures for Proposition~\ref{P:almo}}\label{Su:raP}

In this section, we describe concrete Ramsey theoretic structures of the kind defined in Sections~\ref{Su:ncs} and \ref{Su:rd} that are needed for
the proof of Proposition~\ref{P:almo}.

In the lemma below we record a simple observation about $f^v$.

\begin{lemma}\label{L:trde}
Let $f\colon T^w\to S$, $w\in T$, and $g\colon V\to T$ be rigid surjections. Let $i$ be the injection of $f$. Let $v\in S$. Then
\[
f^v\circ g^{i(v)} = (f\circ g^w)^v.
\]
\end{lemma}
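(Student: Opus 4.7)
The lemma is really a matter of unpacking definitions; no new idea is needed. Fix notation: let $i\colon S\to T^w$ denote the injection of $f$ and $j\colon T\to V$ denote the injection of $g$.

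The first step is to verify that both sides are defined on the same set. On the right, Lemma~\ref{L:ran} gives that $g^w\colon V^{j(w)}\to T^w$ is a (sealed) rigid surjection, and inspection of the proof of Lemma~\ref{L:subt} shows that its injection is $j\res T^w$. Applying Lemma~\ref{L:coin} to $f$ and $g^w$, the composition $f\circ g^w\colon V^{j(w)}\to S$ is a rigid surjection whose injection is $(j\res T^w)\circ i=j\circ i$. Therefore $(f\circ g^w)^v$ is the restriction of $f\circ g^w$ to $\{x\in V^{j(w)}\colon x\leq_V j(i(v))\}$. Since $i(v)\sqsubseteq_T w$ and the morphism $j$ is monotone with respect to the lexicographic orders, $j(i(v))\leq_V j(w)$, and this restriction set collapses to $V^{j(i(v))}$. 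On the left, $g^{i(v)}$ has domain $V^{j(i(v))}$ by definition, and its image lies in $T^{i(v)}$ by Lemma~\ref{L:ran}; since $T^{i(v)}\subseteq T^w$, the composition $f^v\circ g^{i(v)}$ is also defined on $V^{j(i(v))}$.

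Pointwise equality on $V^{j(i(v))}$ is then routine: both sides evaluate to $f\circ g$. Explicitly, for $x\in V^{j(i(v))}\subseteq V^{j(w)}$ we have
\[
(f\circ g^w)^v(x)=f(g^w(x))=f(g(x))
\]
because $g^w$ coincides with $g$ on $V^{j(w)}$, while
\[
(f^v\circ g^{i(v)})(x)=f^v(g(x))=f(g(x))
\]
because $g^{i(v)}$ coincides with $g$ on $V^{j(i(v))}$ and its value $g(x)$ lies in $T^{i(v)}$, where $f^v$ coincides with $f$.

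I do not expect any real obstacle; the only step requiring a moment of care is the identification of $(V^{j(w)})^{j(i(v))}$ with $V^{j(i(v))}$, which rests on the monotonicity of the morphism $j$ together with $i(v)\sqsubseteq_T w$.
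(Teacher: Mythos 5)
Your argument is correct and follows essentially the same route as the paper's (much terser) proof: identify both domains as $V^{j(i(v))}$ via Lemmas~\ref{L:coin} and \ref{L:ran}, then note that both sides agree pointwise with $f\circ g$. One small misstatement worth fixing: you claim $i(v)\sqsubseteq_T w$, but in general $i(v)$ need only satisfy $i(v)\leq_T w$ (that is what $i(v)\in T^w$ gives; the two relations coincide only on a branch), and the weaker fact $i(v)\leq_T w$ together with monotonicity of $j$ for $\leq$ is all you actually use to get $j(i(v))\leq_V j(w)$.
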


\begin{proof} Let $j$ be the injection of $g$. It is clear from Lemmas~\ref{L:coin} and \ref{L:ran} that the domains of
both functions $f^v\circ g^{i(v)}$ and $(f\circ g^w)^v$ are equal to $V^{j(i(v))}$. For every $x$ in this set both functions
are equal to $f(g(x))$.
\end{proof}

Fix a family
\[
\mathcal T
\]
of ordered trees such that each ordered tree has an isomorphic copy in $\mathcal T$ and such that for $T_1, T_2\in {\mathcal T}$,
\[
T_1\cap T_2 =\emptyset.
\]
Let
\[
{\mathcal L} = \{ T^v\mid T\in {\mathcal T},\, v\in T \}.
\]
Introducing two families, $\mathcal T$ and $\mathcal L$, will be helpful in defining our Ramsey domain and
checking conditions (A) and (C) from the definition of Ramsey domain in this particular case.

We now define a normed composition space. Let $\A$ be the set of all sealed rigid surjections $g\colon T_2\to T_1$ for $T_1, T_2\in {\mathcal L}$.
The operation $\cdot$ is defined as follows. Let $f, g\in \A$. We let
$g\cdot f$ be defined precisely when $f\colon T^y\to S$ and $g\colon V\to T$ for some ordered trees
$S,T, V$ and a vertex $y$ in $T$. We let
\begin{equation}\label{E:compost}
g\cdot f= f\circ g^y.
\end{equation}
Note that the orders of $f$ and $g$ are different on the two sides of the equation above. Observe further that, by Lemma~\ref{L:ran}, the image of $g^y$ is equal to the domain of $f$.
The image of $g\cdot f$ is equal to $S$ and its domain is equal to the domain of $g^y$, that is, to $V^{j(y)}$, where $j$ is the
injection of $g$. So $g\cdot f\in \A$.

For $f\in \A$ whose image is an ordered tree $S$ define $\partial f$ as follows. If $S$ consists only of its root, let
\[
\partial f = f.
\]
If $S$ has a vertex that is not a root, let $v$ be the second $\leq_S$-largest vertex in $S$. Define
\[
\partial f = f^v.
\]

Consider $\mathcal L$ as a partial order with the partial order relation on it being inclusion. We make the following observation about the order of inclusion on $\mathcal L$.
By disjointness of $\mathcal T$, we have that for
$T_1, T_2\in {\mathcal L}$, $T_1\subseteq T_2$ precisely when there is $T\in {\mathcal T}$ and $v,w\in T$ such that $v\leq_T w$, $T_1 = T^v$, and
$T_2 = T^w$. We define $|\cdot|\colon \A\to {\mathcal L}$ by letting
\[
|f| = {\rm dom}(f)
\]
for $f\in \A$.

\begin{lemma}
The structure $(\A, \cdot, \partial, |\cdot|)$ defined above is a normed composition space.
\end{lemma}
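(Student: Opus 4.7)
The plan is to verify the four defining properties in turn: associativity of $\cdot$ and axioms (i)--(iii). All are a matter of unpacking $g \cdot f = f \circ g^y$ and $\partial f = f^v$ (where $v$ is the second $\leq_S$-largest vertex of the image of $f$), and the main conceptual input is Lemma~\ref{L:trde}, which is set up precisely to deliver both associativity and axiom (i).

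First I would record that $\cdot$ actually returns an element of $\A$, i.e.\ a sealed rigid surjection: Lemma~\ref{L:coin} handles the rigid-surjection part and supplies the composite injection, and sealedness propagates because the $\leq_S$-largest leaf of the image $S$ of $f$ is sent by the injection of $f$ to $y$ (the top of $T^y$, since $f$ is sealed), and then by the injection of $g$ to the top leaf of the new domain $V^{j(y)}$. Associativity and axiom (i) then turn out to be two specializations of the identity
\[
(f \circ g^{w})^{v} \;=\; f^{v} \circ g^{i(v)},
\]
which is exactly Lemma~\ref{L:trde} with $i$ the injection of $f$. For associativity one unwinds $(a \cdot b) \cdot c$ and $a \cdot (b \cdot c)$ (using the composite-injection formula from Lemma~\ref{L:coin}) and arrives at this identity with $f = b$, $g = a$; for axiom (i) one takes $v$ to be the second $\leq_S$-largest vertex of the image of $f$, the case when that image is a singleton being trivial because then $\partial f = f$.

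Axiom (ii) is immediate: if $a$ has domain $T^{y_0}$ and image $S$ and $v$ is the second $\leq_S$-largest vertex of $S$, monotonicity of the injection $i$ of $a$ together with sealedness of $a$ force $i(v) \leq_T i(s_{\max}) = y_0$, so the domain $T^{i(v)}$ of $\partial a$ sits inside $T^{y_0}$. Axiom (iii) is a parallel check: $|b| \leq |c|$ forces $|b| = T^{y'} \subseteq T^y = |c|$ inside a common $T \in \mathcal T$ (by disjointness of $\mathcal T$ and the description of inclusion on $\mathcal L$), so if $a \colon V \to T$ witnesses that $a \cdot c$ is defined, then $a \cdot b = b \circ a^{y'}$ is also defined and its domain $V^{k(y')}$ lies inside $V^{k(y)} = |a \cdot c|$, where $k$ is the injection of $a$.

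The only obstacle I anticipate is notational: the reversed convention $g \cdot f = f \circ g^y$ demands scrupulous tracking of which factor contributes the ambient tree and the vertex $y$, especially when chasing definitions through the triple product in the associativity step.
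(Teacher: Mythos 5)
Your proposal is correct and follows essentially the same path as the paper: closure under $\cdot$ via Lemma~\ref{L:coin} plus the sealedness bookkeeping, axiom (i) as the specialization $(g\cdot f)^v = g\cdot f^v$ of Lemma~\ref{L:trde}, axiom (ii) by inspection, and axiom (iii) by the same domain computation $|g\cdot f'| = V^{j(v)} \leq V^{j(w)} = |g\cdot f|$. The only cosmetic difference is that the paper dispatches associativity by appeal to Lemma~\ref{L:coin} alone (matching the two sides as restrictions of $c\circ b\circ a$ with the same domain), whereas you route it through Lemma~\ref{L:trde}; both are sound.
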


\begin{proof}
Associativity of multiplication is clear from Lemma~\ref{L:coin}.

We check now the three axioms of normed composition spaces.
The identity $\partial(g\cdot f) = g\cdot \partial f$ is a special case of Lemma~\ref{L:trde} since this lemma implies
that for sealed rigid surjections $g\colon V\to T$ and $f\colon T^w \to S$, with $w\in T$, and for $v\in S$ we have
\[
(g\cdot f)^v = g\cdot f^v.
\]
Indeed, observe that $g\cdot f = f\circ g^w$ and $g\cdot f^v= f^v\circ g^{i(v)}$, where $i$ is the injection of $f$. Thus, we obtain the following sequence of equalities,
by using Lemma~\ref{L:trde} to get the second equality,
\[
(g\cdot f)^v = (f\circ g^w)^v = f^v\circ g^{i(v)} = g\cdot f^v.
\]

The second axiom, that is, the inequality $|\partial f|\leq |f|$, is clear from the definitions.

To check the third axiom, assume that $g\cdot f$ is defined. This means that $f\colon T^w\to S$ and $g\colon V\to T$. Moreover,
\[
|g\cdot f| = V^{j(w)},
\]
where $j$ is the injection of $g$. Now if $|f'|\leq |f|$, then $f'\colon T^v\to S'$ for some $v\in T$ with $v\leq_T w$.
Thus, $g\cdot f'$ is defined and
\[
|g\cdot f'| = V^{j(v)},
\]
which implies $|g\cdot f'|\leq |g\cdot f|$ as $j(v)\leq_V j(w)$.
\end{proof}

Now we define a Ramsey domain over $(\A, \cdot, \partial, |\cdot|)$. Recall the set $\mathcal T$ that was used to
defined $\mathcal L$ above.

Let $\mathcal F$ consist of non-empty sets $F\subseteq \A$ with the property that there are
$T_1, T_2\in {\mathcal T}$ such that for each $f\in F$, we have ${\rm rng}(f)= T_1$ and ${\rm dom}(f)\subseteq T_2$. Note that, since $f\in \A$ and $T_2\in {\mathcal T}$,
this last condition is equivalent to saying that ${\rm dom}(f)$ is an initial subtree of $T_2$.
It is possible for no function in $F$ as above to have its domain equal to $T_2$. Despite of this,
since the trees in $\mathcal T$ are pairwise disjoint, each $f\in F$ determines not only ${\rm dom}(f)$, but also $T_2$. Therefore,
it is possible to define
\[
d(F) = T_2\;\hbox{ and }\; r(F) =T_1.
\]
For $F_1, F_2\in {\mathcal F}$, let
$F_1\bullet F_2$ be defined precisely when $d(F_2) = r(F_1)$. Observe that in this case $f_1\cdot f_2$ is defined for all $f_1\in F_1$ and $f_2\in F_2$, and let
\[
F_1\bullet F_2 = F_1\cdot F_2.
\]
Note that $F_1\bullet F_2\in {\mathcal F}$ and
\[
d(F_1\bullet F_2) = d(F_1)\;\hbox{ and }\; r(F_1\bullet F_2) = r(F_2).
\]

Let ${\mathcal P}$ consist of all finite non-empty subsets $P$ of
$\A$ of the following form. There exist $S\in {\mathcal L}$ and $T\in {\mathcal T}$
such that for each $g\in P$, ${\rm rng}(g) = S$ and ${\rm dom}(g)\subseteq T$.  Let
\[
d(P)=T.
\]
So we have ${\mathcal F}\subseteq {\mathcal P}$.
For $F\in {\mathcal F}$ and $P\in {\mathcal P}$, $F\dbullet P$
is defined precisely when $d(P)= r(F)$, in which case, we let
\[
F\dbullet P = F\cdot P.
\]
Note that $f\cdot x$ is defined for each $f\in F$ and $x\in P$ and $d(F\dbullet P) = d(F)$. Furthermore, we have $F\dbullet P \in {\mathcal P}$.

\begin{lemma}\label{L:vanR}
The structure $({\mathcal F}, {\mathcal P}, \bullet,\dbullet)$ is a linear vanishing Ramsey domain over the composition space $(\A,\cdot,  \partial, |\cdot|)$.
\end{lemma}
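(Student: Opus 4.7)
The plan is to verify in turn the axioms (A), (B), (C) of a Ramsey domain together with the vanishing and linearity properties. The unifying observation is that the truncation $\partial$ shrinks the common range of the elements of $P\in\mathcal{P}$ by one vertex at a time, but leaves invariant the ambient tree $T\in\mathcal{T}$ from which their domains are drawn; most of the verification is therefore routine bookkeeping with $d$ and $r$.

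For (A), unfolding definitions, $F\dbullet(G\dbullet P)$ defined gives $d(G\dbullet P) = r(F)$ and $d(P) = r(G)$; since $d(G\dbullet P) = d(G)$ by construction, we obtain $d(G) = r(F)$, so $F\bullet G$ is defined, and then $r(F\bullet G) = r(G) = d(P)$ gives that $(F\bullet G)\dbullet P$ is defined. For (B), if $P$ has common range $S$, let $v^\ast$ be the $\leq_S$-second-largest vertex of $S$ when $|S|>1$ and let $v^\ast$ be the root of $S$ otherwise; then $\partial f = f^{v^\ast}$ for every $f\in P$, so $\partial P$ has common range $S^{v^\ast}$ and the domains remain initial subtrees of the same $T$, whence $\partial P\in\mathcal{P}$ and in fact $d(\partial P) = d(P)$.

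For (C) the content is now immediate: since $d(\partial P) = d(P)$, the hypothesis that $F\dbullet\partial P$ is defined gives $r(F) = d(P)$, so $F\dbullet P$ is defined as well, and the choice $G = F$ works because every $f\in F$ trivially extends itself. For vanishing, set $n = |S|$ where $S$ is the common range of $P$; after $n-1$ iterations of $\partial$ the common range collapses to the single root $r_S$, and any sealed rigid surjection $f'\colon T^w \to \{r_S\}$ has injection $i'$ with $i'(r_S) = r_T$ (since morphisms preserve roots), while sealedness forces $i'(r_S)$ to be the $\leq_{T^w}$-largest leaf of $T^w$, which is $w$ itself; hence $w = r_T$ and $f'$ is the unique constant map $\{r_T\}\to\{r_S\}$, so $\partial^{n-1} P$ is a singleton. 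Linearity holds because the elements of $P$ have domains $T^v$ for varying $v$ inside a fixed $T\in\mathcal{T}$, and $\leq_T$ totally orders these under the inclusion description of the partial order on $\mathcal{L}$.

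I do not expect a real obstacle: the substantive ingredients have already been packaged into Lemmas~\ref{L:coin}, \ref{L:ran}, and \ref{L:trde}, and the sealedness convention has been engineered precisely so that range-truncation is compatible with the composition $\cdot$ at the level of sets. The only step requiring a small amount of attention is the vanishing argument, where one must observe that sealedness propagates through iterated truncation in such a way that a singleton range forces a singleton domain.
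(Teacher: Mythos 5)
Your proof is correct and follows essentially the same route as the paper: (A) via the bookkeeping identities $d(G\dbullet P)=d(G)$ and $r(F\bullet G)=r(G)$, (B) via the observation that $\partial$ shrinks the common range and preserves $d$, (C) via $d(\partial P)=d(P)$ with $G=F$, linearity via domains being initial subtrees $T^w$ of a fixed $T\in\mathcal T$, and vanishing via the fact that a sealed rigid surjection onto a one-vertex tree has a one-vertex domain. Your write-up is somewhat more explicit than the paper's (for instance, spelling out why sealedness forces $w=r_T$ in the vanishing step, and why $\partial f=f^{v^\ast}$ is a uniform description), but there is no substantive difference in the argument.
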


\begin{proof} First we check in order conditions (A)--(C) from the definition of Ramsey domain.
Assume that, for $F_1, F_2\in {\mathcal F}$ and $P\in {\mathcal P}$,
$F_1\dbullet (F_2\dbullet P)$ is defined. Then $r(F_2) = d(P)$ and $r(F_1) = d(F_2\dbullet P)$. Since
$d(F_2\dbullet P) = d(F_2)$, we have $r(F_1) = d(F_2)$. It follows that $F_1\bullet F_2$ is defined and $r(F_1\bullet F_2) = r(F_2)$.
Thus, $(F_1\bullet F_2)\dbullet P$ is defined, as required by (A).
If $P\in {\mathcal P}$, then clearly $\partial P\in {\mathcal P}$, so (B) holds.
Note that if, for $F\in {\mathcal F}$ and $P\in {\mathcal P}$,
$F\dbullet \partial P$ is defined, then $F\dbullet P$ is defined since $d(\partial P) = d(P)$, and (C) follows.
We conclude that $({\mathcal F}, {\mathcal P}, \bullet,\dbullet)$ is a Ramsey domain.

If $P\in {\mathcal P}$ and $d(P) = T$, then
\[
\{ |f|\mid f\in P\} \subseteq \{ T^w\mid w\in T\}
\]
and the latter set is linearly ordered in $\mathcal L$. It follows that $({\mathcal F}, {\mathcal P}, \bullet,\dbullet)$ is linear.

Finally note that if $P\in {\mathcal P}$, $r(P)= S$, and $d(P)= T$, then, for the natural number $t$ equal to one less the number of vertices in $S$,
the range of each element of $\partial^t P$ is equal to the root of $S$.
Since these elements are sealed rigid surjections, it follows that the domain of each of them
also consists only of the root of $T$. Thus, there is precisely one such element. So, $({\mathcal F}, {\mathcal P}, \bullet,\dbullet)$ is vanishing.
\end{proof}

\subsection{Condition (LP) for Proposition~\ref{P:almo}}\label{Su:lpP}

It is clear that the conclusion of Proposition~\ref{P:almo} is just condition (R) for the Ramsey domain $({\mathcal F}, {\mathcal P}, \bullet,\dbullet)$ defined above.
So by Theorem~\ref{T:ram} in conjunction with Lemma~\ref{L:vanR}, to prove Proposition~\ref{P:almo}, it suffices to check condition (LP) for
$({\mathcal F}, {\mathcal P}, \bullet,\dbullet)$. This is what we will do in this section.

Sections~\ref{Su:rlp} and \ref{Su:slr} are, in a sense, preparatory. In Section~\ref{Su:rlp}, we find a condition which is equivalent to condition (LP) for our Ramsey domain but
has a form that makes it easier to prove. The basis of our arguments here is formed
by the construction of an ordered tree $(T; x_1, \dots , x_n)\oplus (T_1, \dots , T_n)$ out of an ordered tree $T$
and ordered forests $T_1, \dots, T_n$.
In Section~\ref{Su:slr}, we prove versions, appropriate for our goal of showing (LP), of auxiliary results stated earlier.

In Section~\ref{Su:lpr}, we give a proof of (LP), in which the main roles are played by the construction of an ordered forest $S\otimes I$ out of an ordered forest $S$ and a linear order $I$
and by particular rigid surjections, namely those fulfilling condition \eqref{E:spfu} of Section~\ref{Su:aux}.

\subsubsection{Restatement of (LP)}\label{Su:rlp}
To set up the formulation and the proof of condition (LP), we will need some new notions.
It will be convenient to use the notion of forest. By a {\em forest} we understand a finite partial order such that the set of
predecessors of each element is linearly ordered. The partial order relation on a forest $T$ is denoted by $\sqsubseteq_T$.
So a forest is a tree with the root removed. The following operation reverses this removal. For a forest $T$, let
\begin{equation}\label{E:onepl}
1\oplus T
\end{equation}
be the tree obtained from $T$ by adding to it one vertex with the vertex becoming the root of $1\oplus T$ and with $\sqsubseteq_T$ being the restriction to $T$
of the tree partial order $\sqsubseteq_{1\oplus T}$. We say that vertices $v_1, v_2$ of a forest $T$ are in the
same {\em component} if there is a vertex $w$ such that $w\sqsubseteq_T v_1$ and $w\sqsubseteq_T v_2$. Clearly, the components of a forest are
disjoint from each other and each of them is a tree.
A forest $T$ is an {\em ordered forest} if it is equipped with a linear order
relation, denoted by $\leq_T$, that is the restriction to $T$ of a linear order relation $\leq_{1\oplus T}$ on $1\oplus T$ that makes $1\oplus T$ into an ordered tree.
So $\leq_T$ is a linear order
that makes each component into an ordered tree and is such that each component of $T$ is an interval. A {\em tree embedding} from an ordered forest
$S$ to an ordered forest $T$ is a function from $S$ to $T$ that extends to an embedding from $1\oplus S$ to $1\oplus T$. Note that an embedding from $S$ to $T$
maps distinct components of $S$ to distinct components of $T$.

Let $T$ be an ordered tree, let $x_1, \dots, x_n\in T$ be distinct, and let $T_1, \dots, T_n$ be ordered forests. We define the ordered tree
\[
V= (T; x_1, \dots , x_n)\oplus (T_1, \dots , T_n)
\]
as follows. The set of all vertices of $V$ is the disjoint union of $T$ and $T_1,\dots , T_n$. The tree relation $\sqsubseteq_V$ on $V$ restricted to $T$
is $\sqsubseteq_T$ and restricted to each $T_i$ is $\sqsubseteq_{T_i}$. Further, for each $1\leq i\leq n$, $x_i\sqsubseteq_V v$ for $v\in T_i$ with the
minimal elements of $T_i$ being immediate successors of $x_i$. This description uniquely determines the tree relation on $V$. We make $V$ into an ordered
tree as follows. The linear order $\leq_V$ on $V$ when restricted to $T$ and $T_i$, $1\leq i\leq n$, is equal to $\leq_T$ and $\leq_{T_i}$, respectively.
Furthermore, we stipulate that $T_i$ is a final interval in the set $\{ v\in V\mid x_i\sqsubseteq_V v\}$ under $\leq_V$. This completely
describes $\leq_V$.
If $A$ is a non-empty linear order and $T$ is a forest, let
\[
A\oplus T = (A; \max A)\oplus (T).
\]
So this is the ordered tree obtained by putting $T$ on top of the linear order of $A$, and the tree is linearly ordered by putting the linear order of $T$ on top of $A$.
Note that if the forest order $\sqsubseteq_T$ is linear, then $A\oplus T$ is a linear order as well and the definition above coincides with the definition from
Section~\ref{Su:aux}. Recall that $A\oplus 1$ is $A\oplus T$, where $T$ consists of one element only. Similarly, if $A$ is a one element set, then $A\oplus T$ is
denoted by $1\oplus T$ as in \eqref{E:onepl}.

We discuss now condition (LP). In this condition we are given $P\in {\mathcal P}$, that is, we have
ordered trees $T\in {\mathcal T}$ and $S\in {\mathcal L}$ and a non-empty set $P$ of sealed rigid surjections from initial subtrees of $T$
onto $S$. We are also given $s_0\in \partial P$. We are looking for an appropriate $F\in {\mathcal F}$.
Note first that if $S$ has only one vertex, then, since elements of $P$ are sealed rigid surjections, $P$ has
only one element and $\partial P= P$, so (LP) is obvious in this case. Assume, therefore, that $S$ has at least two vertices.
Let $i_0$ be the injection of $s_0$.
Let $v_0, v_1\in S$ with $v_1<_S v_0$ be the two $\leq_S$-largest
vertices of $S$. Let $v_2 = v_0\wedge_S v_1$. Let also
\[
w_1 = i_0(v_1),\, w_2 = i_0(v_2)\in T.
\]
Since $s_0$ is sealed, its domain is $T^{w_1}$.

We need to produce
\begin{enumerate}
\item[(1)] an ordered tree $V\in {\mathcal T}$ and a non-empty set $F$ of sealed rigid surjections from initial subtrees of $V$ onto $T$, and

\item[(2)] an element $a\in \A$
\end{enumerate}
so that $F$ and $a$ fulfill (LP).

This will be done as follows. Let $x_1, \dots, x_n\in T$ list, in increasing order,
all $x\in T$ with $w_2\sqsubseteq_T x\sqsubseteq_T w_1$. For $1\leq i\leq n$, let $T_i$  be the forest
\[
T_i= \{ v\in T\mid x_i\sqsubseteq_T v,\, w_1 <_T v, \hbox{ and if }i<n, \hbox{ then }x_{i+1}\not\sqsubseteq_T v\}
\]
taken with the inherited tree relation and order relation. Let $T'$ be $T$ with
all the vertices in $T_1, \dots, T_n$ removed. So $T'$ is the union of $T^{w_1}$ and all the vertices $v\in T$ with
$w_2<_T v$ and $w_2\not\sqsubseteq_T v$. Note further that $T$ is isomorphic to
\[
(T'; x_1, \dots , x_n)\oplus (T_1, \dots , T_n).
\]

The ordered tree $V$ that we need to define will be an ordered tree in $\mathcal T$ isomorphic to an ordered tree of the form
\[
V= (T'; x_1, \dots , x_n)\oplus (V_1, \dots , V_n)
\]
for some ordered forests $V_1, \dots, V_n$ that will be specified later. We define $F$ to be the set of all rigid surjections from an initial
subtree of $V$ onto $T$. To define the element $a\in \A$,  let
\[
a={\rm id}_{T^{w_1}}.
\]
Since $T^{w_1}$ is an initial subtree of $V$, we indeed have $a\in \A$.
Note that $F\dbullet P$ and $a\cdot s_0$ are defined. It remains to specify $V_1, \dots, V_n$ and show that
for each $b$-coloring of $F_a\cdot P^{s_0}$ there is $f\in F_a$ such that $f\cdot P^{s_0}$ is monochromatic.

Let
\[
A_i = \{ w\in T\mid w\sqsubseteq_T x_i\}.
\]
The set $A_i$ is linearly ordered by $\sqsubseteq_T$.
Let
\[
B_i = s_0[A_i].
\]
Since $s_0$ is a rigid surjection, one readily checks that $B_i$ is linearly ordered and downwards closed under $\sqsubseteq_S$.
Further, since $x_1 = w_2=i_0(v_2)$, we have
\[
B_1 = \{ v\in S\mid v\sqsubseteq_S v_2\}.
\]

Now $P^{s_0}$ consists of all $s\in P$ with $s\colon T^w\to S$ for some $w\in T_1$ and such that $s\res T^{w_1} = s_0$.
Indeed, if $i$ is the injection of $s$, then, since $i$ is a morphism, we have $i(v_0)\wedge_T w_1 = w_2$ and, since $i$ is injective,
$i(v_0)\not= w_2$. So $i(v_0)\in T_1$. Since $s$ is a sealed rigid surjection, we get $s\colon T^{i(v_0)}\to S$ and we can take above $w=i(v_0)$.
Note that $T^w$ is
the disjoint union of $T^{w_1},\, T_1^w,\, T_2, \dots, T_n$. So each $s\in P^{s_0}$ is completely determined by $w\in T_1$ and
the restrictions
\[
s\res T_1^w, \, s\res T_2, \dots, \, s\res T_n.
\]
These restrictions are arbitrary functions with $s[T_i] \subseteq B_i$, for $2\leq i\leq n$, and with $s[T_1^w] \subseteq B_1\cup \{ v_0\}$
and $\{ w\}=s^{-1}(v_0)$.

On the other hand, $F_a$ consists of all sealed rigid surjections $t\colon V^y\to T$, for some $y\in V$ with $w_1\leq_V y$, with
$t\res T^{w_1} = {\rm id}_{T^{w_1}}$.
To witness (LP), we will only need those elements of $F_a$ that are of the form $t^w$, with $w\in T_1$, for some
rigid surjection $t\colon V\to T$ with $t\res T' = {\rm id}_{T'}$. Such a $t$ is completely determined by its restrictions
\[
t\res V_1, \dots, t\res V_n.
\]
Note that since $t$ is a rigid surjection, we have
\[
T_1\subseteq t[V_1] \subseteq A_1\cup T_1, \dots, T_n\subseteq t[V_n]\subseteq A_n\cup T_n.
\]

Therefore, (LP) boils down to proving the following statement.

Let $A_1, \dots, A_n$ and $B_1, \dots, B_n$ be non-empty linear orders.
Let $r_i\colon A_i\to B_i$ be a rigid surjection for $1\leq i\leq n$. Let $b>0$ be given. Assume $T_1, \dots, T_n$ are forests. There
exist forests $V_1, \dots, V_n$ with the following property. Assume we have a $b$-coloring of all sequences $(u_1, \dots , u_n)$ where
\begin{enumerate}
\item[---] $u_1\colon A_1\oplus V_1^y\to B_1\oplus 1$, for some $y\in V_1$, $u_i\colon A_i\oplus V_i\to B_i$, for $2\leq i\leq n$;

\item[---] $u_i\res A_i = r_i$, for $1\leq i\leq n$;

\item[---] $u_1$ is a sealed rigid surjection.
\end{enumerate}
Then there exist $t_i\colon A_i\oplus V_i\to A_i\oplus T_i$, for $1\leq i\leq n$, that are rigid surjections such that $t_i\res A_i = {\rm id}_{A_i}$ and the color assigned to
$(s_1\circ t_1^w, s_2\circ t_2, \dots, s_n\circ t_n)$ is fixed regardless of the choice of $(s_1, \dots, s_n)$ such that
\begin{enumerate}
\item[---] $s_1\colon A_1\oplus T_1^w\to B_1\oplus 1$, for some $w\in T_1$, $s_i\colon A_i\oplus T_i\to B_i$, for $2\leq i\leq n$;

\item[---] $s_i\res A_i = r_i$, for $1\leq i\leq n$;

\item[---] $s_1$ is a sealed rigid surjection.
\end{enumerate}

A moment's thought reveals that it suffices to show the above statement assuming that $B_i=A_i$, for all $1\leq i\leq n$, and that
each $r_i = {\rm id}_{A_i}$. With this in mind, we state now the condition that implies (LP) that we will prove in what follows. To make the statement
and the arguments that follow a bit more succinct, we adopt the following definition. A function $t\colon A\oplus T\to A\oplus S$, where $S$ and $T$
are ordered forest and $A$ a linear order, is called an {\em $A$-rigid surjection} if it is a rigid surjection and $t\res A = {\rm id}_A$. Note that in
the case when $S$ is the empty forest, an $A$-rigid surjection $t\colon A\oplus T\to A$ is simply a function such that $t\res A = {\rm id}_A$.

\smallskip

\noindent {\em Let $b>0$ be given. Let $A_1, \dots, A_n$ be non-empty linear orders, and let $T_1, \dots, T_n$ be ordered forests.
There exist ordered forests $V_1, \dots, V_n$ with the following property.
Assume we have a $b$-coloring of all tuples $(u_1, \dots, u_n)$, where $u_1\colon A_1\oplus V_1^y\to A_1\oplus 1$ is
a sealed $A_1$-rigid surjection,
with $y\in V_1$ depending on $u_1$, and each $u_i\colon A_i\oplus V_i\to A_i$, $2\leq i\leq n$, is an $A_i$-rigid surjection.
Then there exist $A_i$-rigid surjections $t_i \colon A_i\oplus V_i\to A_i\oplus T_i$, for $i\leq n$, such that all
\[
(s_1\circ t_1^w, s_2\circ t_2, \cdots, s_n\circ t_n)
\]
have the same color, where
$s_1\colon A_1\oplus T_1^w \to A_1\oplus 1$ is a sealed $A_1$-rigid surjection, $w\in T_1$, and $s_i\colon A_i\oplus T_i\to A_i$
is an $A_i$-rigid surjection, for $2\leq i\leq n$.}

\subsubsection{Adaptation of auxiliary lemmas from Sections~\ref{S:tree} and \ref{S:aR}}\label{Su:slr}

The following lemma is an immediate consequence of Lemma~\ref{L:prfrco}.

\begin{lemma}\label{L:frco}
Let $b>0$. Let $S$ be an ordered forest. There exists an ordered forest $S'$ such that for each $b$-coloring of vertices of $S'$
there is a tree embedding $i\colon S\to S'$ such that all elements of $i(S)$ have the same color.
\end{lemma}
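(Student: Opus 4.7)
The plan is to reduce the forest statement to the tree statement of Lemma~\ref{L:prfrco} via the operation $1\oplus(-)$ that adjoins a new root.

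First I would apply Lemma~\ref{L:prfrco} to the ordered tree $1\oplus S$, whose root is the newly adjoined vertex $v_0$. This yields an ordered tree $S''$ such that every $b$-coloring of the vertices of $S''$ admits an embedding $j\colon 1\oplus S\to S''$ with $j((1\oplus S)\setminus\{v_0\})=j(S)$ monochromatic. Set $S'=S''\setminus\{\text{root of }S''\}$, viewed as an ordered forest with the orders inherited from $S''$; by construction $1\oplus S'=S''$ as ordered trees.

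Given any $b$-coloring $c$ of the vertices of $S'$, extend it to a $b$-coloring $\tilde c$ of $S''$ by assigning an arbitrary color to the root of $S''$. Lemma~\ref{L:prfrco} produces an embedding $j\colon 1\oplus S\to S''$ such that $j(S)$ is $\tilde c$-monochromatic. Because $j$ is a morphism, it sends the root of $1\oplus S$ to the root of $S''$, so $j(S)\subseteq S''\setminus\{\text{root}\}=S'$; hence $j(S)$ is also $c$-monochromatic.

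Finally, I would set $i=j\res S\colon S\to S'$. Since $i$ extends to the embedding $j\colon 1\oplus S\to 1\oplus S'=S''$, it qualifies as a tree embedding in the sense of the definition introduced in Section~\ref{Su:rlp}. The image $i(S)=j(S)$ is monochromatic, which is the desired conclusion. I do not anticipate any real obstacle: the only thing to verify carefully is that the notion of tree embedding between ordered forests is exactly what one obtains by restricting an embedding between the corresponding trees $1\oplus(-)$, which is immediate from the definitions.
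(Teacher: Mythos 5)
Your proof is correct and realizes exactly the derivation the paper calls ``immediate'': apply Lemma~\ref{L:prfrco} to $1\oplus S$, strip the root to obtain $S'$, and restrict the resulting embedding $j$ to $S$. The two small points you flag---that $j$ sends roots to roots so $j(S)\subseteq S'$, and that $j\res S$ is a tree embedding in the sense of Section~\ref{Su:rlp} because $j$ witnesses the required extension---are indeed all that needs checking.
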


Recall from Section~\ref{Su:aux} that, for linear orders $L$ and $I$, $L\times I$ is taken with the lexicographic order. Note also that property \eqref{E:spfu} from Section~\ref{Su:aux} implies that $p$ is an $A$-rigid surjection. Below
we will consider functions denoted by $p^x$, which, we recall, are defined by formula \eqref{E:forres}.

\begin{lemma}\label{L:HJ}
Let $b>0$. Let two linear orders $A$ and $L$ be given, with $A$ being non-empty. There is a linear order $I$ such that for each $b$-coloring
of all sealed $A$-rigid surjections from $A\oplus (L\times I)^y$ to $A\oplus 1$, where we allow $y$ to vary over $L\times I$, there is
\[
p\colon A\oplus  (L\times I) \to A\oplus  L
\]
with property \eqref{E:spfu} and such that for each given $x\in L$
\[
\{ r\circ p^x\mid r\colon A\oplus L^x\to A\oplus 1\hbox{ a sealed $A$-rigid surjection}\}
\]
is monochromatic, that is, the color of $r\circ p^x$ depends only on $x\in L$.
\end{lemma}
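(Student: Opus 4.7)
The plan is to derive Lemma~\ref{L:HJ} from Lemma~\ref{L:prHJ} via a natural bijection between the two families of colored objects. Denote the top element of $A\oplus 1$ by $*$. For each $y\in L\times I$, sealed $A$-rigid surjections $\tilde{u}\colon A\oplus (L\times I)^y\to A\oplus 1$ correspond bijectively to functions $u\colon (A\oplus (L\times I))^{y-}\to A$ with $u\res A = {\rm id}_A$: in one direction, the sealed condition in a linear order forces the injection of $\tilde{u}$, which is $v\mapsto \min \tilde{u}^{-1}(v)$, to send $*$ to $y$, so $\tilde{u}^{-1}(*)=\{y\}$ and restricting $\tilde{u}$ below $y$ gives a function into $A$; in the other direction, any such $u$ extends uniquely by setting $\tilde{u}(y)=*$, after which surjectivity, rigidity (witnessed by the morphism $a\mapsto a$, $*\mapsto y$), and the sealed condition are immediate. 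Via this bijection, a $b$-coloring $c$ of the sealed $A$-rigid surjections transports to a $b$-coloring $c'$ of the corresponding functions.

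Applying Lemma~\ref{L:prHJ} to $c'$ then produces the desired $I$ and $p\colon A\oplus (L\times I)\to A\oplus L$ satisfying \eqref{E:spfu}, together with the guarantee that for each $x\in L$ and each $r_0\colon (A\oplus L)^{x-}\to A$ with $r_0\res A = {\rm id}_A$, the $c'$-color of $r_0\circ (p\res \{z: z<\min p^{-1}(x)\})$ depends only on $x$. Given a sealed $A$-rigid surjection $r\colon A\oplus L^x\to A\oplus 1$ with restriction $r_0\colon (A\oplus L)^{x-}\to A$, and writing $e(x)=\min p^{-1}(x)$, so that $p^x = p\res(A\oplus(L\times I))^{e(x)}$, the composition $r\circ p^x$ coincides with $r_0\circ (p\res\{z:z<e(x)\})$ on $\{z:z<e(x)\}$ and takes value $*$ at $z=e(x)$. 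Hence $r\circ p^x$ is precisely the sealed extension of $r_0\circ (p\res\{z:z<e(x)\})$ under the bijection, so $c(r\circ p^x) = c'(r_0\circ(p\res\{z:z<e(x)\}))$, which depends only on $x$, as required.

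The only step requiring attention is verifying that $p(z)<x$ in $A\oplus L$ for every $z<e(x)$ in $A\oplus(L\times I)$, so that $r(p(z))=r_0(p(z))$. This follows from \eqref{E:spfu}: if $z\in A$ then $p(z)=z\in A$; if $z=(x_z,i_z)\in L\times I$ with $x_z<x$, then $p(z)\in A\cup\{x_z\}$; and if $z=(x,i_z)\in \{x\}\times I$ with $z<e(x)$, then by the minimality of $e(x)$ we have $p(z)\neq x$, whence $p(z)\in A$ again by \eqref{E:spfu}. This is routine bookkeeping rather than a substantive obstacle; the mathematical content of Lemma~\ref{L:HJ} is already supplied by Lemma~\ref{L:prHJ}.
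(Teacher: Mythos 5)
Your proof is correct and takes essentially the same approach as the paper: both rely on the bijection between sealed $A$-rigid surjections $A\oplus J^x\to A\oplus 1$ and functions $(A\oplus J)^{x-}\to A$ that restrict to the identity on $A$, transport the coloring across this bijection, and invoke Lemma~\ref{L:prHJ}. The paper states this equivalence in one line; you supply the detailed verification, including the bookkeeping that $p(z)<x$ whenever $z<\min p^{-1}(x)$.
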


\begin{proof} We note that for each two linear orders $A$ and $J$, with $A$ non-empty, and $x\in J$, a sealed rigid surjection $s\colon A\oplus J^x \to A\oplus 1$
is uniquely determined by its restriction $s\res (A\oplus J)^{x-} \colon (A\oplus J)^{x-}\to A$, where $x-$ is the predecessor of $x$ in $A\oplus J$. It follows that Lemma~\ref{L:HJ}
is equivalent to Lemma~\ref{L:prHJ}.
\end{proof}

\begin{lemma}\label{L:HJpro}
Let $b>0$ and let $A_1, \dots, A_n$ and $L_1, \dots, L_n$ be linear orders, with $A_1, \dots, A_n$ non-empty.
There is a linear order $I$ with the following property.
Consider a $b$-coloring of $n$-tuples $(s_1, \dots, s_n)$ such that
\begin{enumerate}
\item[(i)] $s_1\colon A_1\oplus (L_1\times I)^y\to A_1\oplus 1$, for some $y\in L_1\times I$, is a sealed $A_1$-rigid surjection;

\item[(ii)] for $2\leq i\leq n$, $s_i\colon A_i\oplus (L_i\times I)\to A_i$ is an $A_i$-rigid surjection.
\end{enumerate}
Then there exist $p_i\colon A_i\oplus (L_i\times I) \to A_i\times L_i$, for $1\leq i\leq n$, with \eqref{E:spfu} such that for each
sealed $A_1$-rigid surjection $r_1\colon A_1\oplus L_1^x\to A_1\oplus 1$ and all $A_i$-rigid surjections $r_i\colon A_i\oplus L_i\to A_i$, for
$1\leq i\leq n$, the color of
\[
(r_1\circ p_1^x, r_2\circ p_2, \dots, r_n\circ p_n)
\]
depends only on $x$.
\end{lemma}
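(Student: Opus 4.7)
The plan is to prove Lemma~\ref{L:HJpro} by induction on $n$, iterating Lemma~\ref{L:HJ} together with its straightforward non-sealed analogue, obtainable from Lemma~\ref{L:prHJ} by the same method that gives Lemma~\ref{L:HJ}. The base case $n = 1$ is Lemma~\ref{L:HJ} itself.

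For the inductive step, assume the result for $n - 1$ pairs. The crucial observation is that the collections of target maps at each coordinate are finite and independent of $I$: the pairs $(x, r_1)$ with $x \in L_1$ and sealed $A_1$-rigid surjection $r_1\colon A_1\oplus L_1^x\to A_1\oplus 1$, and the $A_i$-rigid surjections $r_i\colon A_i\oplus L_i\to A_i$ for $i\geq 2$, depend only on the given $(A_i, L_i)$. Consequently, for any choice of maps $p_i$ satisfying \eqref{E:spfu}, the number of canonical tuples $(r_1\circ p_1^x, r_2\circ p_2, \ldots, r_n\circ p_n)$ is bounded by a constant $N$ depending only on the data and $b$.

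The strategy is to process coordinates from $n$ downward. First apply the non-sealed analogue of Lemma~\ref{L:HJ} to $(A_n, L_n)$ with $b^{K}$ colors, where $K$ is the finite constant bounding the number of canonical $(n-1)$-tuples; this yields a linear order $I_n$ and, for any coloring, a map $p_n$ for which the $b^K$-valued meta-color---encoding how each $s_n$ interacts with every canonical $(s_1, \ldots, s_{n-1})$---is constant as $r_n$ varies. The residual coloring is then a $b$-coloring of $(n-1)$-tuples, to which the inductive hypothesis applied to $(A_1, L_1), \ldots, (A_{n-1}, L_{n-1})$ produces $I_{n-1}$ and maps $p_1, \ldots, p_{n-1}$ with the desired $x$-dependent product property.

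The main obstacle is reconciling $I_n$ and $I_{n-1}$ into a single $I$ serving all coordinates. The plan is to set $I = I_n\oplus I_{n-1}$ and extend each $p_i$ from its originally constructed domain to $A_i\oplus (L_i\times I)$ by sending the new inputs to values in $A_i$, compatibly with \eqref{E:spfu}. A coloring on the enlarged domain restricts to one on the smaller domain where the intermediate lemmas apply; the extensions can be arranged to leave the restrictions $r\circ p_i^x$ appearing in the conclusion unchanged, since these depend only on $\min p_i^{-1}(x)$, which can be forced to lie in the original smaller domain. Verifying this transfer carefully---making sure the product Ramsey conclusion for the smaller $I_i$ lifts to the larger $I$ after extension---is the most delicate step.
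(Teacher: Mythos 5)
The paper's proof does not iterate: it applies Lemma~\ref{L:HJ} \emph{once}, to the bundled data $A = A_n \times \cdots \times A_1$ (with lexicographic order) and $L = L_n \oplus \cdots \oplus L_1$, obtaining a single $p\colon A\oplus(L\times I)\to A\oplus L$ with \eqref{E:spfu}, and then recovers $p_i$ by composing $p$ with the projection $\pi_i$ onto the $A_i$-coordinate and restricting to $L_i\times I$ (using the identification $(L_n\oplus\cdots\oplus L_1)\times I = (L_n\times I)\oplus\cdots\oplus(L_1\times I)$). Your inductive approach is genuinely different, and I do not think it can be made to work, because of a circularity you do not resolve.

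The circularity is this: to apply the non-sealed Hales--Jewett analogue to the $n$-th coordinate, you must already have the $b^K$-valued meta-coloring of $s_n$ in hand. Its values are $c(r_1\circ p_1^x, \ldots, r_{n-1}\circ p_{n-1}, s_n)$, so even though the index set $\{(x,r_1,\ldots,r_{n-1})\}$ has size $\leq K$ independent of $I$, the actual \emph{arguments} fed to $c$ require the maps $p_1,\ldots,p_{n-1}$. But in your scheme $p_1,\ldots,p_{n-1}$ are produced later, by applying the inductive hypothesis to the \emph{residual} coloring, which only exists once $p_n$ is fixed. So the meta-coloring on coordinate $n$ is not well-defined at the stage where you need it. Underlying this is a structural feature of Lemma~\ref{L:HJpro} that blocks coordinate-by-coordinate iteration in the style of multidimensional Hales--Jewett: the coordinates are coupled through a \emph{single} linear order $I$ appearing in every $L_i\times I$, rather than through independent parameters $I_1,\ldots,I_n$ that could be chosen in a staircase pattern. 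Your proposed repair (set $I=I_n\oplus I_{n-1}$ and extend the $p_i$ into the $A_i$ part) does not escape this: the Hales--Jewett conclusion for coordinate $n$ was established only for colorings of functions on the domain $A_n\oplus(L_n\times I_n)$, whereas $c$ colors tuples of functions on the larger domain $A_n\oplus(L_n\times I)$, and there is no canonical restriction map producing a coloring of the former from a coloring of the latter (a single small-domain $s_n$ has many big-domain extensions, and $c$ may distinguish them). The bundling device in the paper is precisely what handles the shared $I$ cleanly, and some version of it seems unavoidable here.
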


\begin{proof} Consider the product $A= A_n\times\cdots \times A_1$ with the lexicographic order. (In the argument below the choice of this order
is irrelevant.) Applying Lemma~\ref{L:HJ} to $b>0$, the order $A$, and the linear order $L_n\oplus \cdots \oplus L_1$, we get a linear order $I$ and
\[
p\colon A\oplus ( (L_n\oplus\cdots \oplus L_1)\times I) \to A\oplus L_n\oplus \cdots \oplus  L_1
\]
with property \eqref{E:spfu}. Note that we can canonically identify $(L_n\oplus\cdots \oplus L_1)\times I$ with
$(L_n\times I)\oplus \cdots \oplus (L_1\times I)$, which we do. With this identification, by \eqref{E:spfu}, we have $p(L_i\times I)\subseteq A\oplus L_i$.
Let, for $1\leq i\leq n$,
\[
\pi_i\colon A\oplus (L_n\oplus\cdots \oplus L_1)\to A_i\oplus  (L_n\oplus\cdots \oplus L_1)
\]
be the canonical projection. Now define $p_i\colon A_i\oplus (L_i\times I)\to A_i\oplus L_i$, $1\leq i\leq n$, by
\begin{equation}\notag
\begin{split}
p_i\res A_i &= {\rm id}_{A_i}\\
p_i\res (L_i\times I) &= (\pi_i\circ p)\res (L_i\times I).
\end{split}
\end{equation}
It is now routine to check that each $p_i$ has property \eqref{E:spfu} and that they fulfill the conclusion of the lemma.
\end{proof}

Finally, the following lemma is an immediate consequence of Lemma~\ref{L:prexmb}.

\begin{lemma}\label{L:exmb}
Let $A$ be a non-empty linear order and
let $S$ and $T$ be ordered forests. Let $i\colon S\to T$ be an embedding. There exits an  $A$-rigid surjection $s\colon A\oplus T\to A\oplus S$
such that the restriction of the injection of $s$ to $S$ is equal to $i$.
\end{lemma}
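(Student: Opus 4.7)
The plan is to reduce the lemma directly to Lemma~\ref{L:prexmb} by passing from forests to trees via the $A\oplus(\cdot)$ construction. Given an embedding $i\colon S\to T$ of ordered forests, I will first extend it to a map $\tilde{i}\colon A\oplus S\to A\oplus T$ by setting $\tilde{i}\res A={\rm id}_A$ and $\tilde{i}\res S=i$. Since $i$ maps distinct components of $S$ to distinct components of $T$ and since the components of $A\oplus S$ and $A\oplus T$ are obtained from those of $S$ and $T$ by attaching the common linear order $A$ below them, the extension $\tilde{i}$ is easily checked to be a morphism of ordered trees (it preserves $\wedge$, is $\leq$-monotone because $A$ lies below $S$ in $A\oplus S$ and below $T$ in $A\oplus T$, and fixes the root of $A\oplus S$, namely $\min A$). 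Injectivity is immediate, so $\tilde{i}$ is an embedding of ordered trees.

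Next I would invoke Lemma~\ref{L:prexmb} to obtain a rigid surjection $s\colon A\oplus T\to A\oplus S$ whose injection is $\tilde{i}$. Restricting the injection to $S\subseteq A\oplus S$ recovers $\tilde{i}\res S=i$, so the last clause of the conclusion holds automatically.

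It remains to verify that $s\res A={\rm id}_A$, i.e.\ that $s$ is an $A$-rigid surjection. Using the explicit formula from the proof of Lemma~\ref{L:prexmb}, for $w\in A\oplus T$ the value $s(w)$ is the $\sqsubseteq_{A\oplus S}$-largest $v\in A\oplus S$ with $\tilde{i}(v)\sqsubseteq_{A\oplus T} w$. When $w\in A$, every predecessor of $w$ lies in $A$, so only $v\in A$ are eligible; among these, the condition becomes $v\leq_A w$, whose largest solution is $v=w$. Hence $s(w)=w$ for all $w\in A$, finishing the proof.

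I do not anticipate any real obstacle: the only point requiring any care is checking that the extension $\tilde{i}$ really is a tree embedding of $A\oplus S$ into $A\oplus T$, but this is built into the definition of $A\oplus(\cdot)$ and the definition of tree embeddings between ordered forests given just before the construction of $(T;x_1,\dots,x_n)\oplus(T_1,\dots,T_n)$.
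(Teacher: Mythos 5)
Your proof is correct and fills in exactly the deduction the paper asserts is an immediate consequence of Lemma~\ref{L:prexmb}: extend $i$ by the identity on $A$ to a tree embedding $\tilde i\colon A\oplus S\to A\oplus T$, apply Lemma~\ref{L:prexmb}, and verify $s\res A={\rm id}_A$. A minor simplification worth noting: the identity $s\circ\tilde i={\rm id}_{A\oplus S}$ together with $\tilde i\res A={\rm id}_A$ already forces $s(w)=s(\tilde i(w))=w$ for $w\in A$, so the explicit formula from the proof of Lemma~\ref{L:prexmb} is not actually needed for that last step.
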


\subsubsection{Proof of (LP)}\label{Su:lpr}

In this section, we adopt the convention of identifying a natural number $n$ with the set of all its strict predecessors $\{ 0, \dots, n-1\}$;
in particular, $0=\emptyset$. A sequence $t$ of length $n$ is, for us, a function whose domain is $n= \{ 0, \dots, n-1\}$.
So, for a natural number $m\leq n$, $t\res m$ is the restriction of this function to $m$, and $t^\frown a$ is the extension of $t$ to a sequence of
length $n+1$ such that $(t^\frown a)\res n = t$ and $(t^\frown a)(n) = a$. For two sequences $t$ and $t'$, we write $t\subseteq t'$ if $t'$ extends $t$, that is, 
if $t'\res n = t$, where $n$ is the length of $t$. 

For a forest $T$ and $v\in T$, let ${\rm ht}_T(v)$ be the cardinality of the set of all predecessors of $v$ (including $v$), and let
\[
{\rm ht}(T) = \max \{ {\rm ht}_T(v)\mid v\in T\}.
\]
If $T$ is clear from the context, we suppress the subscript $T$ from ${\rm ht}_T(v)$.
Note that ${\rm ht}(v)=1$ precisely when $v$ is a minimal vertex of $T$.

Let $S$ be an ordered forest, and let $I$ be a finite set linearly ordered by $\leq_I$. As usual, we write $\sqsubseteq_S$ for the forest relation on $S$ and
$\leq_S$ for the linear order on $S$.
Set $n = {\rm ht}(S)$. Let
\[
S \otimes I = \{ (s,t)\in S\times I^{\leq n}\mid {\rm ht}(s) = |t|\},
\]
where $I^{\leq n}$ is the set of all sequences of elements of $I$ of length not exceeding $n$ and where $|t|$ denotes the length of the sequence $t$.

We introduce a binary relation on $S\otimes I$ as follows. For $(s_1,t_1), (s_2,t_2)\in S\otimes I$, let
\[
(s_1, t_1)\sqsubseteq_{S\otimes I} (s_2, t_2)
\]
if and only if, for $h = {\rm ht}(s_1)$, 
\begin{equation}\label{E:tro}
\begin{split}
s_1&\sqsubseteq_S s_2,\\
t_1\res (h-1) &= t_2\res (h-1),\hbox{ and }\\
t_1(h-1) &\leq_I t_2(h-1).
\end{split}
\end{equation}

\begin{lemma}
Let $S$ be a forest. Then $S\otimes I$ taken with $\sqsubseteq_{S\otimes I}$ is a forest. 
\end{lemma}

\begin{proof} The proof amounts to showing that $\sqsubseteq_{S\otimes I}$ is a partial order and that, for each $(s,t)\in S\otimes I$, the set 
\[
\{ (s',t')\mid(s',t') \sqsubseteq_{S\otimes I} (s,t)\}
\] 
is linearly ordered by $\sqsubseteq_{S\otimes I}$. All this is straightforward, and we leave it to the reader. 
\end{proof} 

We equip $S\otimes I$ with another binary relation $\leq_{S\otimes I}$ in order to turn $S\otimes I$ into an ordered tree. 
We define it first on the set of all immediate successors of each element of $S\otimes I$.
Let $(s,t)\in S\otimes I$ with $h={\rm ht}_S(s)$. The set of immediate successors of $(s,t)$ with respect to $\sqsubseteq_{S\otimes I}$ is 
\begin{equation}\label{E:imsu}
\bigl\{ \bigl(s', t^\frown (\min I)\bigr) \mid s' \in {\rm im}_S(s)\bigr\}\cup \bigl\{ \bigl(s, (t\res (h-1))^\frown i\bigr)\mid i \in {\rm im}_I(t(h-1))\bigr\}.
\end{equation} 
Note that the second set in the union above has one element if $t(h-1)<_I\max I$ and is empty if $t(h-1)=\max I$. For the elements 
of \eqref{E:imsu}, we set 
\begin{equation}\label{E:lio}
\bigl(s, (t\res (h-1))^\frown i\bigr)\leq_{S\otimes I} \bigl(s', t^\frown (\min I)\bigr)\leq_{S\otimes I} \bigl(s'', t^\frown (\min I)\bigr), 
\end{equation}
when $i\in {\rm im}_I(t(h-1))$ and $s', s'' \in {\rm im}_S(s)$ are such that $s'\leq_S s''$. This definition describes $\leq_{S\otimes I}$ 
on the sets of immediate successors of elements of $S\otimes I$. We extend it lexicographically using $\sqsubseteq_{S\otimes I}$ 
to a linear order on the whole set $S\otimes I$ as described in Section~\ref{Su:deot}. Thus, the following lemma is immediate. 

\begin{lemma}
Let $S$ be an ordered forest. Then $S\otimes I$ with $\sqsubseteq_{S\otimes I}$ 
and $\leq_{S\otimes I}$ is an ordered forest. 
\end{lemma}

We give a more explicit description of the order $\leq_{S\otimes I}$ below. 
Define $Q=Q(S,I)$ by letting
\begin{equation}\label{E:deq}
Q = \{ (s,u) \in S\times I^{<n}\mid {\rm ht}(s) = |u|+1\},
\end{equation}
where $n= {\rm ht}(S)$ and $I^{<n}$ is the set of all sequences of elements of $I$ whose length is strictly smaller than $n$.
For $s\in S$ with ${\rm ht}_S(s) = h$ and for $i<h$, we write
$s(i)$ for the unique vertex of $S$ such that $s(i)\sqsubseteq_Ss$ and ${\rm ht}_S(s(i)) = i+1$; thus producing a sequence $(s(0), \dots, s(h-1))$ with $s=s(h-1)$. 
With an element $(s,u)\in Q$ with $h={\rm ht}_S(s)$, we associate the sequence 
\begin{equation}\label{E:mix} 
\chi(s,u) = (s(0), u(0), s(1), u(1), \dots, s(h-2), u(h-2), s(h-1)). 
\end{equation} 
For $(s_1, u_1), (s_2, u_2)\in Q$, we let 
\[
(s_1, u_1)\leq_Q (s_2, u_2)
\]
if the sequence $\chi(s_1, u_1)$ precedes the sequence $\chi(s_2, u_2)$ in the lexicographic order arising from taking 
$S$ with $\leq _S$ and $I$ with $\leq^*_I$, the order reverse to $\leq_I$. So for $i,j\in I$, 
we set $i\leq^*_Ij$ precisely when $j\leq_Ii$.

The following lemma gives a description of $\leq_{S\otimes I}$ that will be useful in further considerations. 
\begin{lemma}\label{L:oiso}
The function 
\[
Q\times I\ni \bigl((s,u), i\bigr)\to \bigl(s, u^\frown i\bigr)\in S\otimes I
\]
is an isomorphism of linear orders if $Q\times I$ is taken with the lexicographic order arising from $\leq_Q$ and $\leq_I$ and $S\otimes I$ is taken 
with $\leq_{Q\otimes I}$. 
\end{lemma}

\begin{proof} Let $\pi\colon S\otimes I\to Q$ be defined by 
\[
\pi(s,t) = \bigl(s, t\res ({\rm ht}_S(s)-1)\bigr).
\]
Note that, for $(s,u)\in Q$, 
\begin{equation}\notag
\pi^{-1}(s,u) = \{ (s, u^\frown i)\mid i\in I\},
\end{equation}
and, by \eqref{E:tro}, the function 
\[
I\ni i\to (s, u^\frown i)\in S\otimes I 
\]
is an increasing injection from $(I, \leq_I)$ to $(S\otimes I, \sqsubseteq_{S\otimes I})$ and, therefore,
to the linear order $(S\otimes I, \leq_{S\otimes I})$. 
It follows that to get the conclusion of 
the lemma, it will suffice to show that $\pi$ is order preserving, that is, that for $(s_1, t_1), (s_2, t_2)\in S\otimes I$, 
\begin{equation}\label{E:pior}
(s_1, t_1)\leq_{S\otimes I} (s_2, t_2) \Longrightarrow \pi(s_1, t_1)\leq_Q \pi(s_2, t_2). 
\end{equation}

Checking \eqref{E:pior} is accomplished by verifying the following two implications: 
\begin{equation}\label{E:rere} 
(s_1, t_1)\sqsubseteq_{S\otimes I} (s_2, t_2) \Longrightarrow \pi(s_1, t_1)\leq_Q \pi(s_2, t_2)
\end{equation} 
and 
\begin{equation}\label{E:riri} 
\begin{split} 
\bigl( (s_1, t_1), (s_2, t_2)\in {\rm im}_{S\otimes I}(s,t),\, (&s_1, t_1)\leq_{S\otimes I} (s_2, t_2),\, (s_1, t_1)\sqsubseteq_{S\otimes I} (s_1', t_1') \bigr)\\
&\Longrightarrow \pi(s'_1, t'_1) \leq_Q \pi(s_2, t_2). 
\end{split} 
\end{equation}
We will be using formulas \eqref{E:tro}, \eqref{E:lio}, and \eqref{E:mix} without mentioning them explicitly. 

We show \eqref{E:rere} first. 
The condition $(s_1, t_1)\sqsubseteq_{S\otimes I} (s_2, t_2)$ implies 
that, for $h_1= {\rm ht}_S(s_1)$ and $h_2={\rm ht}_S(s_2)$, 
\[
s_1\sqsubseteq_S s_2\;\hbox{ and }\; t_1\res (h_1-1) \subseteq t_2\res (h_2-1),
\]
and, therefore, 
\[
\chi\bigl(\pi(s_1, t_1)\bigr)\subseteq \chi\bigl(\pi(s_2, t_2)\bigr),
\]
which gives $\pi(s_1, t_1)\leq_Q \pi(s_2, t_2)$, as required. 

We check now \eqref{E:riri}. Fix $(s,t)\in S\otimes I$, and set 
\[
h= {\rm ht}_S(s)
\]
for the rest of this proof. The elements of ${\rm im}_{S\otimes I}(s,t)$ are listed in \eqref{E:imsu}. 

First, we consider the case of $(s_1, t_1)$, $(s_2, t_2)$ such that 
\[
s_1, s_2\in {\rm im}_S(s), \, s_1\leq_S s_2,\, t_1=t_2 = t^\frown \min I, 
\]
We can assume $s_1<_S s_2$. From $(s_1, t_1)\sqsubseteq_{S\otimes I} (s_1', t_1')$, we get that 
\[
s_1\sqsubseteq_S s_1'\,\hbox{ and }\, t_1\res h = t_1'\res h, 
\]
which implies 
\[
\bigl(s(0), t(0) ,\dots, s(h-1), t(h-1), s_1\bigr) \subseteq \chi\bigl( \pi( s_1', t_1')\bigr).
\]
We also have 
\[
\bigl(s(0), t(0) ,\dots, s(h-1), t(h-1), s_2\bigr) \subseteq \chi\bigl( \pi( s_2, t_2)\bigr). 
\]
Thus, we get $\pi(s'_1, t'_1) \leq_Q \pi(s_2, t_2)$ since $s_1<_S s_2$. 

Now consider the case of $(s_1, t_1)$, $(s_2, t_2)$ such that 
\[
\begin{split} 
&s_1=s,\, t_1=  (t\res (h-1))^\frown i, \hbox{ where }i \in {\rm im}_I(t(h-1)),\\ 
&s_2\in {\rm im}_S(s),\, t_2 = t^\frown (\min I).
\end{split} 
\]
Let $(s_1', t_1')$ be such that $\bigl(s, (t\res (h-1))^\frown i\bigr)\sqsubseteq_{S\otimes I} (s_1', t_1')$, that is, 
\[
s\sqsubseteq_S s_1',\, t\res (h-1) = t_1'\res (h-1),\hbox{ and }i \leq_I t_1'(h-1). 
\]
It follows that 
\[
\bigl(s(0), t(0) ,\dots, s(h-2), t(h-2), s, t_1'(h-1)\bigr) \subseteq \chi\bigl(\pi\bigl(s_1', t_1'\bigr)\bigr), 
\]
and
\[
\bigl(s(0), t(0) ,\dots, s(h-2), t(h-2), s, t(h-1)\bigr) \subseteq \chi\bigl(\pi\bigl(s_2, t^\frown (\min I)\bigr)\bigr). 
\]
Thus, we get 
\[
\pi\bigl(s_1', t_1'\bigr) \leq_Q \pi\bigl(s_2, t^\frown (\min I)\bigr)
\]
since $t(h-1)<_I t_1'(h-1)$. Condition \eqref{E:riri}, and therefore also condition \eqref{E:pior}, is proved.
\end{proof}

For $(s,u) \in Q$, let
\begin{equation}\label{E:intv}
I(s,u) = \{ \left( s, u^\frown i\right) \mid i\in I\}.
\end{equation}
Note that, for $(s,u)\in Q$, $I(s,u)\subseteq S\otimes I$, the union $\bigcup_{(s,u)\in Q} I(s,u)$ is equal to $S\otimes I$, and, 
by Lemma~\ref{L:oiso}, $I(s,u)$ is an interval with respect to the linear order $\leq_{S\otimes I}$. 
At times, we will use the isomorphism from Lemma~\ref{L:oiso} to identify the linear order $Q\times I$ with $S\otimes I$ taken
with $\leq_{S\otimes I}$. Under this isomorphism $\{ (s,u)\}\times I$ is identified with $I(s,u)$.

In the lemma below, we will be
considering sealed $A$-rigid surjections $f$ from ordered trees of the form $A\oplus S$, where $S$ is an ordered forest, to $A\oplus 1$.
These are simply functions $f\colon A\oplus S\to A\oplus 1$ with the following two properties: $f\res A = {\rm id}_A$ and, for $s\in S$,
$f(s)\not\in A$ if and only if $s$ is the $\leq_S$-largest vertex in $S$. The lemma below is used to transfer the version
of the Hales--Jewett theorem from Lemma~\ref{L:HJ} to a Hales--Jewett--type theorem for trees.

\begin{lemma}\label{L:trtr}
Let $A$ be a non-empty linear order. Let $S$ be a forest and $I$ a linear order. Let $Q=Q(S,I)$.
Let
\[
p\colon A\oplus (Q\times I) \to A\oplus Q
\]
have property \eqref{E:spfu}. There is an $A$-rigid surjection
\[
\pi_p\colon A\oplus (S\otimes I)\to A\oplus S,
\]
with the following properties.

For every $v\in S$ there is $x\in Q$ such that for every sealed $A$-rigid surjection $\rho\colon A\oplus S^v \to A\oplus 1$,
there is a sealed $A$-rigid surjection $r\colon A \oplus Q^x \to A\oplus 1$ such that
\[
r\circ p^x = \rho\circ \pi^v_p,
\]
with the identification $Q\times I = S\otimes I$, so $A\oplus (Q\times I) = A\oplus (S\otimes I)$.

Similarly, for every $A$-rigid surjection $\rho\colon A\oplus S \to A$,
there is an $A$-rigid surjection $r\colon A \oplus Q \to A$ such that
\[
r\circ p = \rho\circ \pi_p.
\]
\end{lemma}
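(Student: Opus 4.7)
The plan is to construct $\pi_p$ as a composition $\pi_p = g \circ p$ for a carefully chosen $g \colon A \oplus Q \to A \oplus S$, so that the factoring of $\pi_p$ through $p$ holds by construction and the transfer properties then follow almost tautologically. To set this up I would first define, inductively on height, a canonical sequence $t_v \in I^{{\rm ht}(v)}$ for each $v \in S$: for minimal $v$ put $t_v = (i^*_{(v, \emptyset)})$, and for $v$ of height $h \geq 2$ put $t_v = t_{v(h-2)}^\frown i^*_{(v,\, t_{v(h-2)})}$, where $i^*_y = \min\{i \in I : p(y, i) = y\}$ is nonempty by \eqref{E:spfu}. Writing $\bar u_v = t_v \res ({\rm ht}(v) - 1)$, one has $(v, \bar u_v) \in Q$ and $t_v = \bar u_v^\frown i^*_{(v, \bar u_v)}$. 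Define $g$ by $g(a) = a$ on $A$, $g((s,u)) = s$ when $u = \bar u_s$, and $g((s,u)) = \max A$ otherwise, and set $\pi_p = g \circ p$.

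Next I would verify that $\pi_p$ is an $A$-rigid surjection, with injection $e$ given by $e \res A = {\rm id}_A$ and $e(v) = (v, t_v)$. The consistency relation $t_v \res (k+1) = t_{v(k)}$ for ancestors makes $e$ preserve $\wedge$, be monotone under $\leq_{S \otimes I}$, and send the root to the root, so $e$ is a morphism embedding. The equation $\pi_p \circ e = {\rm id}$ holds because $p(v, t_v) = (v, \bar u_v)$ by the defining property of $i^*$ and $g((v, \bar u_v)) = v$. The inequality $e \circ \pi_p \sqsubseteq {\rm id}$ follows by case analysis on whether $p(z)$ stays in $Q$ or falls to $A$, using that every element of $A$ is a $\sqsubseteq$-predecessor of every element of $S \otimes I$.

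For the first transfer property, given $v \in S$ I would take $x_v = (v, \bar u_v) \in Q$. Under the identification $Q \times I = S \otimes I$ one computes $i_p(x_v) = (x_v, i^*_{x_v}) = (v, t_v) = e(v)$, matching the initial segments that define the domains of $p^{x_v}$ and $\pi_p^v$. For a sealed $A$-rigid $\rho \colon A \oplus S^v \to A \oplus 1$, set $r$ to be the restriction of $\rho \circ g$ to $A \oplus Q^{x_v}$; the equation $r \circ p^{x_v} = \rho \circ \pi_p^v$ is then immediate from $\pi_p = g \circ p$. Verifying that $r$ is sealed $A$-rigid reduces to checking identity on $A$; that $r(x_v) = \rho(v)$ equals the top element of $A \oplus 1$, because $v$ is the $\leq_S$-largest leaf of $S^v$; and rigidity, which holds because the top value is attained only at the maximum element $x_v$, with all other $r$-values lying in $A \leq \max A$. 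The second transfer property is handled identically with $r = \rho \circ g$ on all of $A \oplus Q$, where all $r$-values lie in $A$ and rigidity is immediate. The main obstacle will be the bookkeeping showing that the canonical elements $(v', \bar u_{v'})$ of $Q^{x_v}$ correspond precisely to $v' \in S^v$, so that the image and sealedness of $r$ emerge correctly.
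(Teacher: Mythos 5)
Your proposal is correct and follows essentially the same route as the paper: your canonically chosen sequences $t_v$, built by iterated minimization of $i^*_{(s,u)} = \min\{i : p((s,u),i)=(s,u)\}$, are exactly the paper's ``very good'' elements, and your $g$ makes explicit the factoring $\pi_p = g\circ p$ that the paper's direct definition of $\pi_p$ (via the ``good'' predicate) also satisfies, with the transfer map $r$ likewise agreeing up to the irrelevant choice of $\max A$ versus $\min A$ for the junk value. The one piece you flag as remaining bookkeeping---that $(s,\bar u_s) \leq_Q (v,\bar u_v)$ forces $s \leq_S v$, so $g$ carries $A\oplus Q^{x_v}$ into $A\oplus S^v$---is indeed the crux of the ``elementary case analysis'' the paper leaves to the reader, and it does go through by comparing the interleaved lexicographic sequences at the first place $s$ and $v$ branch.
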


\begin{proof} Recall the definition \eqref{E:intv} of $I(s,u)$. Throughout this proof
we identify $Q\times I$ with $S\otimes I$ and $\{ (s,u)\}\times I$ with $I(s,u)$ for $(s,u)\in Q$.
Recall also that $p\colon A\oplus (Q\times I) \to A\oplus Q$ fulfills \eqref{E:spfu} if $p\res A ={\rm id}_A$ and, for each $(s,u)\in Q$,
\begin{equation}\label{E:clin} 
(s,u)\in p[I(s,u)]\subseteq A\cup \{ (s,u)\}.
\end{equation}

Fix $(s,t)\in S\otimes I$.
We say that $(s,t)$ is {\em leading} if it is the $\leq_{S\otimes I}$-smallest element of $I(s, t\res ({\rm ht}(s)-1))$
such that $p(s,t) = (s, t\res ({\rm ht}(s)-1))$. We call $(s,t)\in S\otimes I$ {\em very good} if each $(s',t')\in S\otimes I$
with $s'\sqsubseteq_S s$ and $t'\subseteq t$ is leading. We call $(s,t)$ {\em good} if
$p(s,t) = (s, t\res ({\rm ht}(s)-1))$ and each $(s',t')\in S\otimes I$
with $s'\sqsubseteq_S s$, $s'\not= s$, and $t'\subseteq t$, $t'\not= t$, is leading.

We claim that for each $s\in S$ there is exactly one $t$ such that $(s,t)$ is a very good element of $S\otimes I$. We show this by induction on
${\rm ht}(s)$. If ${\rm ht}(s)=1$, the conclusion is clear. Indeed, we take $t= \langle i\rangle$, where $i$ is the smallest element of $I$ with 
$p(s, \langle i\rangle) = (s, \emptyset)$, which exists by \eqref{E:clin}.
Obviously $(s,t)$ is very good and $t$ is unique such.
Let now ${\rm ht}(s)>1$ and let $s'$ be the immediate predecessor of $s$ in $S$. Let
$t'$ be the unique element such that $(s',t')$ is very good. Then $(s,t')\in Q$. Using \eqref{E:clin}, pick the smallest $i\in I$ such that $p(s, t'^\frown i) = (s,t')$. Then
$(s, t'^\frown i)$ is very good. It is clear that this $t'^\frown i$ is unique such.

For $s\in S$, the unique $t$ with $(s,t)$ very good will be denoted by $t_s$. Observe that for $s_1, s_2\in S$ with $s_1\sqsubseteq_S s_2$, we have
\begin{equation}\label{E:tris}
t_{s_1} = t_{s_2}\res {\rm ht}(s_1).
\end{equation}
Indeed, since $(s_1, t_{s_2}\res {\rm ht}(s_1))$ is very good, \eqref{E:tris} follows by uniqueness of $t_{s_1}$.
We also have for $(s,t)\in S\otimes I$
\begin{equation}\label{E:gogo}
\hbox{ if }(s,t) \hbox{ good, then } t_s\res ({\rm ht}(s)-1) = t\res ({\rm ht}(s)-1).
\end{equation}
Indeed, if $(s,t)$ is good, then $(s', t\res ({\rm ht}(s)-1))$ is very good, where $s'$ is the immediate $\sqsubseteq_S$-predecessor of $s$, so
$t_{s'} = t\res ({\rm ht}(s)-1)$, and \eqref{E:gogo} follows from \eqref{E:tris}.

Define $j_p\colon A\oplus S\to A\oplus (S\otimes I)$ by making it identity on $A$, and, for $s\in S$, letting
\[
j_p(s) = (s,t_s).
\]
It follows from \eqref{E:tris} and the definitions of $\sqsubseteq_{S\otimes I}$ and $\leq_{S\otimes I}$ that $j_p$ is an embedding. 
%Indeed, to see that $j_p(v_1\wedge_{A\oplus S} v_2) = j_p(v_1)\wedge_{S\otimes I} j_p(v_2)$ for $v_1, v_2\in A\oplus S$, 
%we make the following observation about $S\otimes I$. Let $(s_1, t_1), (s_2, t_2)\in S\otimes I$, and assume 
%\begin{equation}\label{E:comv}
%t_1\res {\rm ht}(s_1\wedge_S s_2) = t_2\res {\rm ht}(s_1\wedge_S s_2).
%\end{equation}
%Then we have 
%\[
%(s_1, t_1)\wedge_{S\otimes I} (s_2, t_2) = (s_1\wedge_S s_2, t),
%\]
%where $t$ is the common value of the sequences in \eqref{E:comv}. 

We define $\pi_p \colon A\oplus (S\otimes I)\to A\oplus S$ by making it identity on $A$ and, for $(s,t)\in S\otimes I$, letting
\begin{equation}\notag
\pi_p(s,t) = \begin{cases}
p(s,t), &\text{if $p(s,t) \in A$;}\\
s, &\text{if $(s,t)$ is good;}\\
\min A, &\text{if $p(s,t)\not\in A$ and $(s,t)$ is not good.}
\end{cases}
\end{equation}
Note that in the second case $p(s,t) = (s, t\res ({\rm ht}(s)-1))$.

We claim that $j_p$ is the embedding witnessing that $\pi_p$ is a rigid surjection.
Indeed, it is clear that $\pi_p\circ j_p = {\rm id}_{A\oplus S}$. It is also clear that $(j_p\circ \pi_p)\res A = {\rm id}_A$. It remains to verify that
for $(s,t)\in S\otimes I$ we have
\begin{equation}\label{E:emap}
j_p(\pi_p(s,t)) \sqsubseteq_{A\oplus (S\otimes I)} (s,t).
\end{equation}
So let $(s,t)\in S\otimes I$.
If $(s,t)$ is not good, then $\pi_p(s,t)\in A$, so $j_p(\pi_p(s,t))\in A$, and \eqref{E:emap} follows.
If $(s,t)$ is good, then, by \eqref{E:gogo},
\[
j_p(\pi_p(s,t)) = (s, (t\res ({\rm ht}(s)-1))^\frown i_0),
\]
where $i_0\in I$ is the smallest $i\in I$ such that
\[
p(s, (t\res ({\rm ht}(s)-1))^\frown i) = (s, t\res ({\rm ht}(s) -1)).
\]
Since, by virtue of $(s,t)$ being good, the value $p(s,t)$ is also $(s, t\res ({\rm ht}(s)-1))$, we get that $i_0\leq_I t({\rm ht}(s)-1)$, so 
\[
(s, (t\res ({\rm ht}(s)-1))^\frown i_0) \sqsubseteq_{A\oplus (S\otimes I)} (s,t).
\]
Thus, \eqref{E:emap} holds, as required.

Now we check the properties of $\pi_p$ claimed in the conclusion of the lemma. 
Let $v\in S$ be given. Define $x_v\in Q$ by letting
\[
x_v= (v, t_v\res ({\rm ht}(v)-1)).
\]
We write out the rest of the argument only for $\rho \colon A\oplus S^v\to A\oplus 1$; the
same formula defining $r$ works also in the case of $\rho\colon A\oplus S\to A$. 
So, let a sealed $A$-rigid surjection $\rho\colon A\oplus S^v\to A\oplus 1$ be given. We are looking for a sealed $A$-rigid surjection
$r\colon A\oplus Q^{x_v} \to A\oplus 1$ such that $r\circ p^{x_v} = \rho\circ \pi^v_p$. We let $r$ be identity on $A$.
For $(s,u)\in Q^{x_v}$, we define
\begin{equation}\notag
r(s,u) = \begin{cases} \rho(s), &\text{if there is $i\in I$ with $(s, u^\frown i)$ very good;}\\
\min A, &\text{if there is no $i\in I$ with $(s, u^\frown i)$ very good.}
\end{cases}
\end{equation}

We need to see that 
\begin{equation}\label{E:fie}
r\circ p^{x_v} = \rho\circ \pi_p^v. 
\end{equation}
Checking that, for $(s,t)\in S\otimes I$, if $r(p(s,t))$ and $\rho(\pi_p(s,t))$ are both defined, then they are equal,
boils down to an elementary case analysis, which follows the cases in the definition of $\pi_p$. This check involves 
the observation that, for $(s,t)\in S\otimes I$, $(s,t)$ is good if and only if $(s, t\res ({\rm ht}(s)-1)^\frown i)$ is very good for some $i\in I$. 
We leave the details to the reader. 

To finish proving \eqref{E:fie}, it remains to show that the domains of $p^{x_v}$ and $\pi_p^v$ are equal. 
This amounts to showing that the smallest, with respect to $\leq_{S\otimes I}$, element 
$(s,t)\in S\otimes I$ such that $\pi_p(s,t)=v$ is equal to the smallest $(s,t)\in S\otimes I$ such that 
$p(s,t)= (v, t_v\res ({\rm ht}(v)-1))$. We claim that both these conditions imply that $(s,t)= (v, t_v)$, which will finish the proof of the lemma. 

It suffices to see that either one of the two equations 
\[
\pi_p(s,t)=v, \;p(s,t)= (v, t_v\res ({\rm ht}(v)-1))
\]
implies that $s=v$ and $(s,t)$ is good, since then, by \eqref{E:tris} and Lemma~\ref{L:oiso}, the smallest such $(s,t)$ is very good, so $t=t_v$. 
Now, by definition of $\pi_p$, since $v\not\in A$, the condition $\pi_p(s,t) = v$ is equivalent to 
\[
s=v \hbox{ and } (s,t) \hbox{ is good}, 
\]
as required. By \eqref{E:clin}, the condition $p(s,t)= (v, t_v\res ({\rm ht}(v)-1))$ implies that 
\begin{equation}\label{E:rla}
s=v\;\hbox{ and }\;t= \bigl( t_v\res ({\rm ht}(v)-1)\bigr)^\frown i, \hbox{ for some }i\in I. 
\end{equation}
It follows that $p(v,t)= (v, t_v\res ({\rm ht}(v)-1))$, which, 
by \eqref{E:tris} and the second conjunct of \eqref{E:rla}, gives that $(v, t)$ is good. Thus, by the first conjunct of \eqref{E:rla}, $(s,t)$ is good, as required. 
\end{proof}

Now we prove condition (LP) as restated at the end of Section~\ref{Su:rlp}. Our notation is as in this statement.

For the given $b$ and $T_1$, Lemma~\ref{L:frco} produces an ordered forest $T_1'$. We claim that
\[
V_1= T_1'\otimes I, \, V_2=T_2\otimes I,\,\dots,\, V_n = T_n\otimes I
\]
for some linear order $I$ are as required.

Let $c$ be a $b$-coloring of all tuples $(u_1, \dots, u_n)$ as in the statement of (LP) with the above defined $V_1, \dots, V_n$. Let
\[
Q_1=Q(T_1',I),\, Q_2= Q(T_2, I),\, \dots,\, Q_n = Q(T_n,I)
\]
be defined as in \eqref{E:deq}. As usual, we identify $T_1'\otimes I$ with $Q_1\times I$ and $T_i\otimes I$ with $Q_i\times I$ for $2\leq i\leq n$.
Then $c$ extends to a coloring of all $n$-tuples whose entries are:
a sealed $A_1$-rigid surjection from $A_1\oplus (Q_1\times I)^y$ to $A_1\oplus 1$ for some $y\in Q_1\times I$ followed in order by
$A_i$-rigid surjections from $A_i\oplus (Q_i\times I)$ to $A_i$ for $2\leq i\leq n$ as in Lemma~\ref{L:HJpro}. By Lemma~\ref{L:HJpro}, there exists a linear order
$I$ and functions
\[
p_i\colon A_i\oplus (Q_i\times I) \to  A_i\oplus Q_i,
\]
for $i\leq n$, with property \eqref{E:spfu} and such that, for $x\in Q_1$ and a sealed $A_1$-rigid surjection $r_1\colon A_1\oplus (Q_1)^x\to A_1\oplus 1$ and $A_i$-rigid surjections
$r_i\colon A_i\oplus Q_i\to A_i$, for $2\leq i\leq n$,
the color
\begin{equation}\label{E:laco}
c(r_1\circ p_1^x, r_2\circ p_2, \dots, r_n\circ p_n)
\end{equation}
depends only on $x$.

Let now $\pi_{p_1}\colon A_1\oplus (T_1'\otimes I) \to A_1\oplus T_1'$ and
$\pi_{p_i}\colon A_i\oplus (T_i\otimes I) \to A_i\oplus T_i$, for $2\leq i\leq n$, be rigid surjections given by Lemma~\ref{L:trtr} applied to $p_1, p_2, \dots, p_n$.
It follows from Lemma~\ref{L:trtr} and the observation above that the color \eqref{E:laco} depends only on $x$
that, for $v\in T_1'$ and a sealed $A_1$-rigid surjection $s_1\colon A_1\oplus (T_1')^v\to A_1\oplus 1$ and $A_i$-rigid surjections
$s_i\colon A_i\oplus T_i\to A_i$, for $2\leq i\leq n$,
the color
\[
c(s_1 \circ \pi_{p_1}^v, s_2\circ \pi_{p_2}, \dots, s_n\circ \pi_{p_n})
\]
depends only on $v$. This observation gives a $b$-coloring of vertices $v$ of $T_1'$.
Let $i \colon T_1\to T_1'$ be an embedding such that $i[T_1]$ is monochromatic. By Lemma~\ref{L:exmb}, there exists a rigid surjection
$q\colon A_1\oplus T_1' \to A_1\oplus T_1$ whose injection restricted to $T_1$ is equal to $i$. Then
\[
q\circ \pi_{p_1}\colon A_1\oplus V_1 \to A_1\oplus T_1
\]
is a rigid surjection. Then
\[
t_1= q\circ \pi_{p_1}\;\hbox{ and }\;t_i = \pi_{p_i} \hbox{ for }2\leq i\leq n
\]
are as desired.

\subsection{Passage from sealed rigid surjections to arbitrary rigid surjections}\label{Su:setsu}

The aim of this section is to deduce Theorem~\ref{T:mn} from Proposition~\ref{P:almo}. The deduction is based on a new truncation-like operation for
rigid surjections that relies on the notion of conjugate leaves.

\subsubsection{Conjugate leaves and a truncation-like operation}\label{Su:lot}

By a {\em leaf} of a tree $T$ we understand a $\sqsubseteq_T$-maximal node of $T$. We write
\[
\ell(T)
\]
for the set of all leaves of $T$.
Let $S$ and $T$ be ordered trees. Let $i\colon S\to T$ be an embedding. We say that a leaf $y$ in $T$ is {\em $i$-conjugate} to a leaf
$x$ in $S$ provided that
\begin{enumerate}
\item[(i)] if $x$ is the $\leq_S$-largest leaf in $S$, then $y$ is the $\leq_T$-largest leaf in $T$;

\item[(ii)] if $x$ is not the $\leq_S$-largest leaf in $S$, let $x'$ be the $\leq_S$-smallest leaf with $x<_S x'$; then $y$ is the $\leq_T$-largest leaf in $T$
with
\begin{equation}\label{E:ccoo}
y<_Ti(x') \;\hbox{ and }\; i(x)\wedge_T i(x') = y\wedge_T i(x').
\end{equation}
\end{enumerate}

Note that in point (ii) above there always exists a leaf $y$ with \eqref{E:ccoo}; for example, any leaf $y$ with $i(x)\sqsubseteq_T y$ has this property.
We see that if $y$ is $i$-conjugate to $x$, then
\begin{equation}\notag
i(x)\leq_Ty<_T i(x').
\end{equation}
Note further that the set
\[
\{ y\in \ell(T) \mid i(x) \leq_T y<_T  i(x') \}
\]
contains two kinds of leaves---those for which $i(x)\wedge_Ti(x') = y\wedge_T i(x')$
and, possibly, those for which $i(x)\wedge_Ti(x') <_T y\wedge_T i(x')$. The leaves of the first kind form a non-empty $\leq_T$-initial segment of the set, and
the leaf $i$-conjugate to $x$ is the $\leq_T$-largest leaf in this segment.
Observe also that the $\leq_T$-largest leaf in $T$ is $i$-conjugate only to the $\leq_S$-largest leaf in $S$.

We drop the subscripts in $\wedge_S$, $\wedge_T$ and $\wedge_V$ in the subsequent proofs.

\begin{lemma}\label{L:cucu}
Let $i\colon S\to T$ and $j\colon T\to V$ be embeddings. Let $x\in \ell(S),\, y\in \ell(T)$ and $z\in \ell(V)$. Assume that $y$ is $i$-conjugate to $x$ and
$z$ is $j$-conjugate to $y$. Then $z$ is $(j\circ i)$-conjugate to $x$.
\end{lemma}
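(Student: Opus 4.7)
The plan is to split on whether $x$ is the $\leq_S$-largest leaf of $S$. If it is, then clause (i) of conjugacy gives that $y$ is the $\leq_T$-largest leaf, and clause (i) applied to $z$ in turn gives that $z$ is the $\leq_V$-largest leaf, which is the required clause (i) for $z$ relative to $j\circ i$. Assume therefore that $x$ is not the $\leq_S$-largest leaf, let $x'$ be the $\leq_S$-smallest leaf with $x<_S x'$, and aim to verify clause (ii) for $z$ and $j\circ i$: namely, $z<_V (j\circ i)(x')$, the meet identity $z \wedge_V (j\circ i)(x') = (j\circ i)(x) \wedge_V (j\circ i)(x')$, and maximality of $z$ among such leaves. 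A preliminary observation is that $y$ is not the $\leq_T$-largest leaf of $T$: from $y<_T i(x')$ one sees that either $i(x')$ is itself a leaf greater than $y$, or some leaf extends $i(x')$ strictly in the tree order and hence is $>_T i(x') >_T y$. Consequently the $j$-conjugacy of $z$ to $y$ is also governed by (ii); let $y^*$ denote the $\leq_T$-smallest leaf of $T$ with $y <_T y^*$.

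The heart of the argument is a dichotomy for the relative position of $y^*$ and $i(x')$: either $y^* \leq_T i(x')$ in the lex order, or $i(x') \sqsubsetneq_T y^*$ in the tree order. Indeed, if no leaf lies in the lex-interval $(y, i(x')]$ of $T$, then the smallest leaf $\geq_T i(x')$ is obtained by descending from $i(x')$ through smallest immediate successors, which yields a leaf strictly extending $i(x')$; this leaf must equal $y^*$. In each branch, a short argument using the maximality of $y$ among leaves $\tilde y$ with $\tilde y\wedge_T i(x') = i(x)\wedge_T i(x')$ delivers the key identity
\[
y\wedge_T y^* \;=\; y\wedge_T i(x') \;=\; i(x)\wedge_T i(x').
\]
Since $j$ is a morphism, this in turn yields $z\wedge_V j(y^*) = j(y)\wedge_V j(y^*) = j(i(x)\wedge_T i(x'))$.

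Properties (a) $z<_V j(i(x'))$ and (b) $z\wedge_V j(i(x')) = j(i(x))\wedge_V j(i(x'))$ now follow by case analysis on the dichotomy. If $y^*\leq_T i(x')$, then $j(y^*)$ and $j(i(x'))$ pass through the same immediate successor of $j(y\wedge_T i(x'))$ in $V$, so the divergence data of $z$ at $j(y\wedge_T i(x'))$ transfers from $j(y^*)$ to $j(i(x'))$. If $i(x')\sqsubsetneq_T y^*$, then $j(i(x'))$ lies strictly above $z\wedge_V j(y^*)$ on the predecessor chain of $j(y^*)$, and the elementary principle ``$v_1\sqsubseteq v_2$ together with $z\wedge v_2\sqsubseteq v_1$ imply $z\wedge v_1=z\wedge v_2$'' transfers everything from $j(y^*)$ to $j(i(x'))$. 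The step I expect to be the main obstacle is maximality (c). The plan there is: given any leaf $\tilde z\in V$ satisfying (a) and (b) with respect to $j\circ i$ and $x$, show that $\tilde z$ automatically satisfies $\tilde z<_V j(y^*)$ and $\tilde z\wedge_V j(y^*)=j(y)\wedge_V j(y^*)$, so that maximality of $z$ as the $j$-conjugate of $y$ forces $\tilde z\leq_V z$. The subtlety is that $V$ may contain leaves outside $j(T)$ that one must rule out; however, the meet condition on $\tilde z$ pins its divergence from $j(i(x'))$ at the deep vertex $j(i(x)\wedge_T i(x'))$, and the dichotomy then forces $\tilde z$ to diverge from $j(y^*)$ at the same vertex and on the same lex side, yielding the desired inequalities.
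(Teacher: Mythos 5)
Your argument is correct, but it is organized differently from the paper's. The paper does not verify the three clauses of $(j\circ i)$-conjugacy directly. Instead, writing $y'$ for your $y^*$, it introduces the two sets of leaves $A=\{v\in\ell(V)\mid ji(x)\wedge ji(x')<_V v\wedge ji(x')\}$ and $B=\{v\in\ell(V)\mid j(y)\wedge j(y')<_V v\wedge j(y')\}$, observes that the $(j\circ i)$-conjugate of $x$ and the $j$-conjugate of $y$ are, respectively, the immediate $\ell(V)$-predecessors of $\min A$ and $\min B$, and then reduces the lemma to showing $A$ and $B$ agree on $\{v\leq_V j(y')\}$. Both inclusions then follow from short meet/lex computations. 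Your proof and the paper's proof hinge on the same central identity $y\wedge_T y'=y\wedge_T i(x')=i(x)\wedge_T i(x')$ (the paper's \eqref{E:numb}), but you extract it, and carry out the rest of the verification, via a dichotomy on whether $y^*\leq_T i(x')$ or $i(x')\sqsubsetneq_T y^*$. That dichotomy is sound (the two cases are exhaustive and exclusive, since if no leaf lies in the lex-interval $(y,i(x')]$ then $y^*$ must strictly tree-extend $i(x')$), and your case-by-case transfer of divergence data from $j(y^*)$ to $j(i(x'))$ — including the maximality step (c), which correctly handles leaves of $V$ outside $j(T)$ by pinning divergence at $j(i(x)\wedge i(x'))$ — is correct. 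The trade-off: your approach is more transparent in that it checks the definition of conjugacy clause by clause, at the price of the explicit dichotomy; the paper's approach avoids any case split because it first isolates the fact that $i(x)\wedge i(x')$ strictly tree-precedes $y'\wedge i(x')$ (its \eqref{E:prf}), which holds unconditionally and does the work your dichotomy does. Indeed, you could streamline your argument along the same lines: \eqref{E:prf} alone gives $j(i(x)\wedge i(x'))\sqsubsetneq j(y^*)\wedge ji(x')$, which is all that is needed to transfer divergence data between $j(y^*)$ and $ji(x')$ in steps (a), (b) and (c), so the two cases can be merged.
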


\begin{proof}
If one of the leaves $x, y, z$ is the largest leaf in its tree, then all of them are, and the conclusion of the lemma follows. We assume, therefore,
that $x,y,z$ are not the largest leaves in their trees.
We write $ji$ for $(j\circ i)$.

Let $x'$ be the $\leq_S$-smallest leaf in $S$ that is larger than $x$, and let $y'$ be the $\leq_T$-smallest leaf in $T$ that is larger than $y$.
Let
\[
A = \{ v\in \ell(V) \mid  ji(x)\wedge ji(x')<_V v\wedge ji(x')\},
\]
and let
\[
B = \{ v\in \ell(V) \mid  j(y)\wedge j(y')<_V v\wedge j(y')\}.
\]
Note that the immediate $\leq_V$-predecessor in $\ell(V)$ of the smallest point in $A$ is $ji$-conjugate to $x$, and the immediate $\leq_V$-predecessor in $\ell(V)$
of the smallest point in $B$ is $j$-conjugate to $y$. It suffices to show that the smallest
leaves in $A$ and $B$ are the same. Clearly $j(y')\in B$. Also note that by applying $j$ to $i(x)\wedge i(x')<_T y'\wedge i(x')$ we get that $j(y')\in A$. Thus,
it will be enough to show that
\begin{equation}\label{E:me}
A\cap \{ v\in \ell(V) \mid v\leq_V j(y')\} = B \cap \{ v\in \ell(V) \mid v\leq_V j(y')\}.
\end{equation}

First we make some observations about the relative position of $i(x),\, i(x'),\, y,$ and $y'$.
Note that since $y$ is $i$-conjugate to $x$,
\begin{equation}\label{E:prf}
i(x)\wedge i(x')\hbox{ is a strict $\sqsubseteq_T$-predecessor of }y'\wedge i(x').
\end{equation}
Note further that
\begin{equation}\label{E:numb}
i(x)\wedge i(x')= y\wedge i(x')  = y\wedge y'.
\end{equation}
Indeed, the first equality in \eqref{E:numb} follows immediately since $y$ is $i$-conjugate to $x$;
the second equality follows from the first one and from \eqref{E:prf}.

To show \eqref{E:me}, we need to prove two inclusions. We start with $\subseteq$. Using \eqref{E:numb}, note that
\begin{equation}\label{E:tr}
ji(x)\wedge ji(x') = j(y)\wedge j(y')
\end{equation}
Observe that $j(y')\leq_Vji(x')$ as $y'\leq_Ti(x')$. So, for $v\in \ell(V)$ with $v\leq_V j(y')$, we have $v\leq_V j(y')\leq_V ji(x')$, hence
$v\wedge ji(x') \sqsubseteq_V v\wedge j(y')$, and therefore
\begin{equation}\notag
v\wedge ji(x') \leq_V v\wedge j(y').
\end{equation}
From this inequality and from \eqref{E:tr}, it follows that $\subseteq$ holds in \eqref{E:me}.

To show the opposite inclusion, it suffices to see $B\subseteq A$. Assume that $v$ is a leaf in $V$ and $v\not\in A$, that is,
\begin{equation}\label{E:str}
v \wedge ji(x') \leq_V ji(x)\wedge ji(x').
\end{equation}
From it, since, by \eqref{E:prf}, $ji(x)\wedge ji(x')$ is a strict $\sqsubseteq_V$-predecessor of $j(y')\wedge ji(x')$,
we see that $v \wedge ji(x')$ is a strict $\sqsubseteq_V$-predecessor of
$j(y')\wedge ji(x')$. As a consequence, we immediately get
\begin{equation}\label{E:imr}
v\wedge ji(x') = v\wedge j(y').
\end{equation}
From \eqref{E:numb}, we have
\begin{equation}\label{E:pp}
ji(x)\wedge ji(x') = j(y)\wedge ji(x').
\end{equation}
From \eqref{E:numb} again we get
\begin{equation}\label{E:ll}
j(y)\wedge ji(x') = j(y)\wedge j(y').
\end{equation}
Putting together \eqref{E:imr}, \eqref{E:str}, \eqref{E:pp}, and \eqref{E:ll}, we get
\[
v\wedge j(y') \leq_V j(y)\wedge j(y').
\]
So $v\not\in A$ implies $v\not\in B$, and the lemma is proved.
\end{proof}

Let $f\colon T\to S$ be a rigid surjection. Let $x$ be a leaf in $S$. A leaf $y$ of $T$ is called {\em $f$-conjugate to $x$}
if $y$ is $i$-conjugate to $x$, where $i$ is the injection of $f$. For a leaf $x$ of $S$, define
\[
f_x = f\res T^y,
\]
where $y$ is the leaf in $T$ that is $f$-conjugate to $x$ and $T^{y}$ is defined by formula \eqref{E:rest} .

\begin{lemma}\label{L:ima}
Let $f\colon T\to S$ be a rigid surjection and let $x\in \ell(S)$. Then the image of $f_x$ is equal to $S^x$, and $f_x\colon T^y\to S^x$ is a rigid surjection,
where $y\in \ell(T)$ is $f$-conjugate to $x$.
\end{lemma}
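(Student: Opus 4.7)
The plan is to reduce the lemma to Lemma~\ref{L:subt} after identifying the image of $f_x$ with $S^x$. First I would check that $T^y=\{w\in T:w\leq_T y\}$ is closed under $\sqsubseteq_T$-predecessors: since $\sqsubseteq_T$ refines $\leq_T$ on a tree, any $\sqsubseteq_T$-predecessor of an element of $T^y$ is still $\leq_T y$. Thus $T^y$ is a subtree of $T$, and Lemma~\ref{L:subt} applied to it immediately gives that $f_x=f\res T^y$ is a rigid surjection onto the subtree $f[T^y]$ of $S$. Everything then reduces to proving $f[T^y]=S^x$.

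The inclusion $S^x\subseteq f[T^y]$ should be routine. For $v\leq_S x$, monotonicity of the injection $i$ of $f$ gives $i(v)\leq_T i(x)$, and the paragraph following the definition of $i$-conjugate leaves explicitly records $i(x)\leq_T y$. Hence $i(v)\in T^y$ and $v=f(i(v))\in f[T^y]$.

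The reverse inclusion $f[T^y]\subseteq S^x$ is the substantive point and, I expect, the main obstacle. Given $w\in T^y$ with $v:=f(w)$, I would argue by contradiction, assuming $v>_S x$. From $i\circ f\sqsubseteq_T{\rm id}_T$ one has $i(v)\sqsubseteq_T w\leq_T y$, so $i(v)\leq_T y$. If $x$ is the $\leq_S$-largest leaf then $x$ is the $\leq_S$-maximum of $S$ and $v>_S x$ is impossible, so we are in case~(ii) of conjugacy: let $x'$ be the $\leq_S$-smallest leaf with $x<_S x'$, so $y<_T i(x')$ and $y\wedge_T i(x')=i(x)\wedge_T i(x')$. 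The chain $i(v)\leq_T y<_T i(x')$, together with the fact that an injective morphism between ordered trees reflects $\leq$, forces $v<_S x'$, placing $v$ strictly between the leaves $x$ and $x'$ in $\leq_S$.

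The contradiction should then come from a lex-versus-tree-order analysis. Setting $a=x\wedge_S x'$, a direct unpacking of the lexicographic pre-order around the leaf $x$ shows that any node strictly between $x$ and $x'$ in $\leq_S$ must be a $\sqsubseteq_S$-ancestor of $x'$ strictly above $a$; applying $i$ yields $i(a)\sqsubsetneq_T i(v)\sqsubsetneq_T i(x')$. The conjugacy identity $y\wedge_T i(x')=i(a)$ says the paths from the root to $y$ and to $i(x')$ branch apart precisely at $i(a)$; the path to $i(v)$, being a $\sqsubseteq_T$-prefix of the path to $i(x')$, then branches apart from the path to $y$ at the same point, so $y\wedge_T i(v)=i(a)$. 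Comparing the immediate successors of $i(a)$ on the $y$-side and on the $i(v)$-side (the latter coinciding with the $i(x')$-side), the inequality $y<_T i(x')$ transfers to $y<_T i(v)$, contradicting $i(v)\leq_T y$. The delicate step, and thus the main obstacle, is exactly this last passage between the tree order (where the $\wedge$-identity of conjugacy lives) and the lex order (where the hypothesis $i(v)\leq_T y$ lives); everything else amounts to bookkeeping.
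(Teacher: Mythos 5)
Your proof is correct and follows the paper's overall strategy: reduce to checking $f[T^y]=S^x$ via Lemma~\ref{L:subt}, dispose of the easy inclusion using $i(x)\leq_T y$, and prove $f[T^y]\subseteq S^x$ by contradiction from the conjugacy condition. The endgame differs slightly from the paper's. The paper first establishes, for every $w\in T^y$, the auxiliary inequality $w\wedge_T i(x')\sqsubseteq_T w\wedge_T i(x)$, and then splits the contradiction into two cases according to whether $f(w)$ lies $\sqsubseteq_S$-strictly above $x\wedge_S x'$ on the path to $x'$, or $x'<_S f(w)$. You instead work directly with $i(f(w))$: from $i(f(w))\sqsubseteq_T w\leq_T y<_T i(x')$ you get $f(w)<_S x'$ at once (which shows the paper's second case cannot occur), and then you derive the contradiction $y<_T i(f(w))$ from the conjugacy $\wedge$-identity, rather than from a violated $\wedge$-inequality on $w$. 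This is a clean, slightly more streamlined variant. Both arguments rest on the same structural fact, which you flag but do not prove (neither does the paper): any node $\leq_S$-strictly between consecutive leaves $x<_S x'$ must lie $\sqsubseteq_S$-strictly between $x\wedge_S x'$ and $x'$. If you write this up in full, it would be worth including that verification, since it is the crux on which both your proof and the paper's silently rely.
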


\begin{proof} By Lemma~\ref{L:subt}, only $f[T^y]= S^x$ needs checking.
If $x$ is the $\leq_S$-largest leaf in $S$, the conclusion is clear. Assume therefore that $x$ is not the largest leaf.
Let $i$ be the injection of $f$, and let $x'$ be the $\leq_S$-smallest leaf in $S$ with $x<_S x'$.

To see $f[T^y]\subseteq S^x$, note that for $w\in T^y$ we have, by definition,
\begin{equation}\label{E:rata}
w\leq_T y
\end{equation}
and, as a consequence of the definition  and
of $y$ being $f$-conjugate to $x$,
\begin{equation}\label{E:qqq}
w\wedge i(x')\sqsubseteq_T w\wedge i(x).
\end{equation}

Now take $w\in T$ and assume that $f(w)\not\in S^x$. Then either $f(w)\sqsubseteq_S x'$ and $x\wedge x'$ is a strict $\sqsubseteq_S$-predecessor of $f(w)$, or
$x'<_S f(w)$. In the first case, we get that $i(f(w))\sqsubseteq_T i(x')$ and $i(x)\wedge i(x')$ is a strict $\sqsubseteq_T$-predecessor of $i(f(w))$.  Therefore, since
$i(f(w))\sqsubseteq_T w$, we get that $w\wedge i(x)$ is a strict $\sqsubseteq_T$-predecessor of $w\wedge i(x')$, contradicting \eqref{E:qqq}.
In the second case, we get
\[
y\leq_T i(x')<_T i(f(w)) \sqsubseteq_T w.
\]
So $y<_T w$ contradicting \eqref{E:rata}.

The inclusion $S^x\subseteq f[T^y]$ is clear: since $i(x)$ is in $T^y$ and $f(i(x)) = x$, we see that all leaves in $S^x$, and therefore all vertices of $S^x$, are in the image of $f\res T^y$.
\end{proof}

\begin{lemma}\label{L:compot}
Let $S,T,V$ be ordered trees, and let $g\colon V\to T$ and $f\colon T\to S$ be rigid surjections.
Let $x\in \ell(S)$, and let $y\in \ell(T)$ be $f$-conjugate to $x$. Then
\[
f_x\circ g_y = (f\circ g)_x.
\]
\end{lemma}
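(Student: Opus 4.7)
The plan is straightforward: verify that both sides of the equation have the same domain and agree pointwise on that domain, using the two preceding lemmas on composition (Lemma~\ref{L:coin} and Lemma~\ref{L:cucu}) together with Lemma~\ref{L:ima} to track ranges.

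First I would identify the injections. Let $i\colon S\to T$ be the injection of $f$ and $j\colon T\to V$ be the injection of $g$. By Lemma~\ref{L:coin}, $f\circ g$ is a rigid surjection with injection $j\circ i$. Let $z\in\ell(V)$ be the leaf $g$-conjugate to $y$. Since $y$ is $i$-conjugate to $x$ and $z$ is $j$-conjugate to $y$, Lemma~\ref{L:cucu} implies that $z$ is $(j\circ i)$-conjugate to $x$; that is, $z$ is the leaf $(f\circ g)$-conjugate to $x$. Therefore, by definition, the domains of both $f_x\circ g_y$ and $(f\circ g)_x$ are equal: the domain of $(f\circ g)_x$ is $V^z$, and the domain of $g_y$ is $V^z$ as well.

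Next I would check that the composition $f_x\circ g_y$ is actually well formed. By Lemma~\ref{L:ima} applied to $g$, the image of $g_y$ is contained in $T^y$, which is the domain of $f_x$, so $f_x\circ g_y$ makes sense on all of $V^z$. For any $v\in V^z$, we simply have
\[
(f_x\circ g_y)(v)=f_x(g_y(v))=f(g(v))=(f\circ g)(v)=(f\circ g)_x(v),
\]
since all four functions agree with the underlying maps $f$, $g$, $f\circ g$ wherever defined.

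There is no real obstacle here; the only thing to be careful about is making sure the conjugacy computations line up so that $z$ is simultaneously the leaf used in defining $g_y$ and in defining $(f\circ g)_x$. Lemma~\ref{L:cucu} is precisely what is needed for this, and once it is invoked, the equality of the two functions reduces to a tautological pointwise check.
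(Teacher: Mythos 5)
Your proof is correct and is essentially the same as the paper's: let $z$ be the leaf $g$-conjugate to $y$, invoke Lemma~\ref{L:coin} and Lemma~\ref{L:cucu} to see that $z$ is also $(f\circ g)$-conjugate to $x$, use Lemma~\ref{L:ima} to confirm $g[V^z]\subseteq T^y$ so the composition is well-defined on $V^z$, and conclude by the pointwise identity on that domain.
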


\begin{proof} Let $z$ be the leaf in
$V$ that is $g$-conjugate to $y$. Then we have
\[
f_x\circ g_y = (f\res T^y)\circ (g\res V^z) = (f\circ g)\res V^z,
\]
where the last equality holds as $g[V^z]\subseteq T^y$ by Lemma~\ref{L:ima}. Since, by Lemmas~\ref{L:cucu} and \ref{L:coin}, we have that $z$ is $(f\circ g)$-conjugate to $x$, we have
\[
(f\circ g)_x = (f\circ g)\res V^z,
\]
and the lemma follows.
\end{proof}

\subsubsection{Proof of Theorem~\ref{T:mn} from Proposition~\ref{P:almo}}

Fix a natural number $b>0$ and ordered trees $S$ and $T$ as in the assumption of Theorem~\ref{T:mn}. Let $s$ and $t$ be the largest vertices
in $S$ and $T$ with respect to $\leq_S$ and $\leq_T$, respectively.
Let $S^+$ be the ordered tree obtained from $S$ by adding one vertex $s^+$ so that $s^+$ is an immediate $\sqsubseteq_{S^+}$-successor of the root and it is
the $\leq_{S^+}$-largest element of $S^+$. Let $T^+$ be an ordered tree obtained from $T$ in an analogous way by adding one vertex $t^+$.
Note that each rigid surjection $f\colon T\to S$ extends to a sealed rigid surjection $f'\colon T^+\to S^+$ by mapping $t^+$ to $s^+$, and observe that
\begin{equation}\label{E:ccc}
t\hbox{ is }f'\hbox{-conjugate to } s\;\hbox{ and }\; (f')_s =f.
\end{equation}

Let $U$ be an ordered tree obtained from Proposition~\ref{P:almo} for $b$, $S^+$ and $T^+$. We claim that the following statement holds.

\noindent {\em For each $b$-coloring of all rigid surjections from $U^y$ to $S$, where $y\in \ell(U)$, there exists $y_0\in \ell(U)$ and a rigid surjection
$g\colon U^{y_0}\to T$ such that the set
\[
\{ f\circ g\mid f\colon T\to S\hbox{ a rigid surjection}\}
\]
is monochromatic.}

Indeed, assume we have a $b$-coloring $c$ as in the assumption of the statement. We define now a $b$-coloring
$c'$ of all sealed rigid surjections from $U$ to $S^+$ as follows. For a sealed rigid surjection $h\colon U\to S^+$, let
\[
c'(h) = c(h_s).
\]
By our choice of $U$, there exists a sealed rigid surjection $g^+\colon U\to T^+$ such that the color $c'(f'\circ g^+)$ is fixed for
all sealed rigid surjections $f'\colon T^+\to S^+$.  Let $y_0\in \ell(U)$ be $g^+$-conjugate to $t$ and let $g = (g^+)_t$. Then
$g\colon U^{y_0}\to T$ is a rigid surjection. We show that it is as required by the conclusion of the statement.
If $f\colon T\to S$ is a rigid surjection, let $f'\colon T^+\to S^+$ be the sealed rigid
surjection obtained by mapping $t^+$ to $s^+$. Then, using Lemma~\ref{L:compot} and \eqref{E:ccc}, we obtain
\[
c(f\circ g) = c((f')_s \circ (g^+)_t) = c((f'\circ g^+)_s) = c'(f'\circ g^+).
\]
Thus, the color $c(f\circ g)$ does not depend on $f$.

We deduce the conclusion of Theorem~\ref{T:mn} from the above statement. We need to produce an ordered tree $V$. Let $U$ be as in
the conclusion of the statement above. For $y\in \ell(U)$, let $U_0^y$ be the ordered forest obtained from the ordered tree $U^y$ by removing the root. Let
$V_0$ be the ordered forest whose underlying set is the disjoint union $\bigcup_{y\in \ell(U)} U_0^y$, whose forest relation $\sqsubseteq_{V_0}$
is equal to $\sqsubseteq_{U_0^y}$ when restricted
to $U_0^y$ and does not relate vertices from distinct sets $U_0^y$, and whose linear order relation $\leq_{V_0}$
is equal to $\leq_{U_0^y}$ when restricted to
$U_0^y$ and makes all vertices in $U_0^{y}$ $\leq_{V_0}$-smaller than all vertices in $U_0^{y'}$ if $y<_U y'$. Finally, let
$V = 1\oplus V_0$, where the right hand side is defined as in the beginning of Section~\ref{Su:rlp}. We consider each $U^y$ to be a subtree of $V$ consisting
of $U_0^y$ and the root of $V$.

We claim that the ordered tree $V$ is as required. For each $y\in \ell(U)$, let
\[
\pi_y\colon V\to U^y
\]
be defined by letting $\pi_y\res U^y = {\rm id}_{U^y}$ and by mapping each $U^{y'}$ to the root of $U^y$ for $y'\not= y$. Note that $\pi_y$ is a rigid surjection; its
injection is ${\rm id}_{U^y}$. Now assume we have a $b$-coloring $c$ of all rigid surjections from $V$ to $S$. Define a $b$-coloring $c'$ of all rigid surjections from
$U^y$ to $S$ for $y\in \ell(U)$ by letting for $f\colon U^y\to S$
\[
c'(f) = c(f\circ \pi_y).
\]
It follows from the statement that there exists $y_0\in \ell(U)$ and a rigid surjection $g'\colon U^{y_0}\to T$ such that the color $c'(f\circ g')$ does not depend on the rigid surjection
$f\colon T\to S$. Define now a rigid surjection $g\colon V\to T$ by
\[
g = g'\circ \pi_{y_0}.
\]
Note that if $f\colon T\to S$ is a rigid surjection, then
\[
c(f\circ g) = c(f\circ g'\circ \pi_{y_0}) = c'(f\circ g')
\]
so the color $c(f\circ g)$ does not depend on $f$ as required, and Theorem~\ref{T:mn} is proved.

\medskip

\noindent {\bf Acknowledgement.} I would like to thank Miodrag Soki{\'c} and Anush Tserunyan for their 
remarks concerning the paper.

\end{document}